\protected\def\ignorethis#1\endignorethis{}
\let\endignorethis\relax
\newenvironment{lcases}
  {\left\lbrace \begin{aligned}}
  {\end{aligned} \right.}
\newtheorem{thm}{Theorem}[section]
\newtheorem{prop}{Proposition}[section]
\newtheorem{cor}{Corollary}[section]
\theoremstyle{definition}
\newtheorem{rem}{Remark}[section]
\DeclareMathOperator{\Tr}{tr}
\def\R{\mathbb{R}}
\def\Z{\mathbb{Z}}
\def\C{\mathbb{C}}
\def\W8{W_{\infty}}
\def\d{\delta}
\def\d*{\delta_*}
\def\w{\omega}
\def\div{{\rm div}}
\def\supp{{\rm supp}}
\def\phi{\varphi}
\def\divr{ \div_{\rho^c}}
\def\dsg{d\sigma^c}
\def\dr{d\rho^c}
\def\us{ \tilde u}
\def\Emgs{\mathbb{E}_{\mu_{\sigma^c,\gamma}}}
\begin{document}

\title{On a non-periodic modified Euler equation: existence and quasi-invariant measures }
\author[Ana Bela Cruzeiro and Alexandra Symeonides]{Ana Bela Cruzeiro (1) and Alexandra Symeonides (2)}
 \address{
(1) GFMUL and Dep. de Matem\'atica do Instituto Superior T\'ecnico, Univ. de Lisboa \hfill\break\indent 
		Av. Rovisco Pais, 1049-001 Lisboa, Portugal \hfill\break\indent
			}
 \email{abcruz@math.tecnico.ulisboa.pt}

\address{
 (2)  GFMUL and Dep. de Matem\'atica da Faculdade de Ci\^encias, Univ. de Lisboa\hfill\break\indent 
		Campo Grande, 1749-016 Lisboa, Portugal \hfill\break\indent
			}
 \email{asymeonides@fc.ul.pt}

\begin{abstract}\noindent
We consider a modified Euler equation on $\mathbb R^2$. We prove existence of weak global solutions for bounded (and fast decreasing at infinity) initial conditions and construct Gibbs-type measures on function spaces which are quasi-invariant for the Euler flow. Almost everywhere with respect to such measures (and, in particular, for less regular initial conditions),  the flow is shown to be also globally defined.
\end{abstract}

\maketitle

\tableofcontents

\section{Introduction}

The Cauchy problem for the Euler equation is a challenging problem in nonlinear partial differential equations. Local existence of smooth solutions was proved by Lichtenstein in 1925 \cite{Lich}. In two-dimensions and in bounded domains existence, uniqueness and global regularity were shown when the initial vorticity is bounded, by Yudovich (1963) \cite{Y}. Solutions with initial data of finite energy were studied also by Kato \cite{Kato} and Bardos \cite{Bard}, among others. There is an extensive literature about local solutions of Euler equations, but much less is known about global ones. The only known results to the authors are due to DiPerna and Majda \cite{dPM} concerning very weak solutions and a recent work  \cite{BH}  dealing with special function spaces which allow for unbounded vorticities.

The least action principle on the diffeomorphisms group (Arnold \cite{Arn}, Ebin-Marsden \cite{EM}, more recently Brenier \cite{Bre})  is a different approach, that studies the Lagrangian problem for the position and not directly the Cauchy problem for the velocity field.

There is also the statistical approach to this type of equations, that consists in defining a priori invariant (or quasi-invariant) measures for the flow and using such measures to prove existence starting (almost everywhere) in the support of the measures. These supports are in general spaces of irregular functions. With respect to this approach, we mention \cite{AC} for the case of the periodic two-dimensional Euler equation. Recently, in \cite{CSy} we have obtained by these methods local solutions in the plane.

In this work, we consider a modification of the Euler equation (c.f.\eqref{Euler1}) involving the pressure term, which allows us to use the Ornstein-Ulhenbeck operator instead of the Laplacian in the vorticity equation and to use Sobolev spaces with respect to Gaussian measures. For this modified equation we first look for weak solutions starting with bounded functions. Then we construct quasi-invariant Gibbs-type measures and define global solutions of the equation for less regular initial conditions (in the support of such probability measures).

\addtocontents{toc}{\protect\setcounter{tocdepth}{1}}



\section{The modified Euler equation}

We present a different Euler equation, with a modification of the pressure contribution, namely

\begin{equation}\label{Euler1}
\frac{\partial \us}{\partial t} + (\us \cdot \nabla)\us =-\nabla p +cxp, \qquad \div \us=0
\end{equation}
where $\us:\R\times\R^2\to\R^2$ denotes the time dependent velocity field, $p:\R\times\R^2\to \R$  may depend on $c$ and $c$ is a fixed parameter in $(0,1)$.

After the  change of variables
\begin{equation}\label{change_var}
u(t,x)=\sigma^c(x) \us(t,x), \qquad t\in \R,~ x\in \R^2
\end{equation}
where  $\sigma^c(x)=\frac{1}{2\pi} e^{-\frac{c|x|^2}{2}}$ denotes a Gaussian density in $\R^2$, the equation reads,

\begin{equation}\label{Euler2}
\frac{\partial u}{\partial t} + (u \cdot \nabla)(\rho^c u) =-\nabla(\sigma^c p), \qquad \divr u=0,
\end{equation}
where $\rho^c(x):=({\sigma^c})^{-1}(x)=2\pi e^{\frac{c|x|^2}{2}}$ and $\divr u$ is defined by
$$
\int_{\R^2} \divr u f \dr = -\int_{\R^2} u \cdot \nabla f \dr, \qquad \forall f\in \mathcal{C}^1_c
$$
(for simplicity, we use the notation $\dr =\rho^c dx$).
  We assume that the initial condition for \eqref{Euler2} is defined by $u_0=\sigma^c \us_0$, where $\us_0$ is the initial data for \eqref{Euler1}, and that $\us$ and $u$ vanish sufficiently rapidly at infinity.
 
As we will see below, this change of variables allows us to study the equations in $L^2_{\sigma^c}(\R^2)$, the space of real-valued functions that are square integrable with respect to the measure $\sigma^c dx$.

\subsection*{Hermite polynomials and Gaussian Sobolev spaces} 
We recall the definition of the $k$-th order Hermite polynomial on $\R^2$
$$
H_k^c(x):=\Pi_{i=1,2} H_{k_i}^c(x_i), \qquad k\in \mathbb{Z}^2, ~k\geq 0
$$
where
$$
H_{k_i}^c(x_i) = \frac{1}{\sqrt{c^{k_i}}\sqrt{k_i !}}e^{\frac{cx_i^2}{2}}\frac{\partial^{k_i}}{\partial x_i^{k_i}}e^{-\frac{cx_i^2}{2}}, \qquad i=1,2
$$
denotes the one-dimensional Hermite polynomial of order $k_i$. We write
$$
H_k^c(x)= \frac{1}{\sqrt{c^{|k|}}\sqrt{k !}}e^{\frac{c|x|^2}{2}}D^k e^{-\frac{c|x|^2}{2}}
$$
where $|k|=k_1+k_2$, $k!=k_1 ! k_2! $ and $D^k=\frac{\partial^{k_1}}{\partial x_1^{k_1}}\frac{\partial^{k_2}}{\partial x_2^{k_2}} $. It is well known that the collection $\{H_k^c(x)\}_{k\geq 0}$ forms an orthonormal basis for $L^2_{\sigma^c}(\R^2)$. Moreover, the Hermite polynomials are eigenfunctions for the Ornstein-Uhlenbeck operator, $L^c: L^2_{\sigma^c}(\R^2) \to L^2_{\sigma^c}(\R^2)$, defined by 
$$
L^c \phi = \Delta \phi - cx\cdot \nabla \phi.
$$
We have 
$$
L^c H_k^c(x) = -c|k| H_k^c(x), \qquad \forall k\geq 0 .
$$
We recall some properties of the Hermite polynomials that we will use below. For the one-dimensional Hermite polynomials we have 
\begin{enumerate}
\item { \it Differentiation formula}:
\begin{equation}\label{diff_formula1D}
\frac{d}{dx}H_n^c(x)=-\frac{\sqrt{n}}{c}H_{n-1}^c(x);
\end{equation}
\item {\it Recursive relation}:
\begin{equation}\label{rec_relation1D}
\sqrt{n+1} H_{n+1}^c(x) - x H_n^c(x) + \sqrt{n} H_{n-1}^c(x)=0; 
\end{equation}
\item {\it Product formula}:
\begin{equation}\label{prod_formula1D}
H_n^c(x) H_m^c(x)= \sum_{r\leq n \wedge m} \Theta(n,m,r)H_{n+m-2r}^c(x),
\end{equation}
where
\begin{equation}\label{bin_coeffs1D}
\Theta(n,m,r)= \left[ \binom{n}{r} \binom{m}{r} \binom{n+m-2r}{n-r}\right]^{1/2}.
\end{equation}
\end{enumerate}
\begin{rem}
The first property is well-known, see for example \cite{MAKL}. The product formula can be found in \cite{EMOT} (pag. 195, equation (37)); here the formula is stated for the ``physicists'' Hermite polynomials, $\mathcal{H}_n(x)= (-1)^n e^{x^2}\frac{d^n}{dx^n}e^{-x^2}$. From the relation $H_n^c(x)=\frac{2^{-\frac{n}{2}}(-1)^n}{\sqrt{n!}}\mathcal{H}_n(\sqrt{\frac{c}{2}}x)$  we get equations \eqref{prod_formula1D}-\eqref{bin_coeffs1D}.
\end{rem}
Properties  \eqref{diff_formula1D} to \eqref{bin_coeffs1D} can be generalised for the two-dimensional Hermite polynomials. If $k\in \Z^2$ and $x\in \R^2$, we have
\begin{enumerate}
\item { \it 2D Differentiation formula}:
\begin{equation}\label{diff_formula}
\nabla H_k^c(x)=-\frac{1}{c}\left(\sqrt{k_1}H_{k_1-1}^c(x_1)H_{k_2}^c(x_2), \sqrt{k_2}H_{k_1}^c(x_1)H_{k_2-1}^c(x_2)\right);
\end{equation}
\item {\it 2D Recursive relation}:
\begin{align}\label{rec_relation}
& \mbox{ for } i=1,2\mbox{ and }j\neq i, \nonumber \\
& \sqrt{k_i+1} H_{k_i+1}^c(x_i)H_{k_j}^c(x_j)  - x_i H_{k_i}^c(x_i)H_{k_j}^c(x_j) + \sqrt{x_i} H_{k_i-1}^c(x_i)H_{k_j}^c(x_j)=0;
\end{align}
\item {\it 2D Product formula}:
\begin{align}\label{prod_formula}
H_k^c(x) H_h^c(x)& = \sum_{\substack{r_1\leq k_1 \wedge h_1 \\ r_2\leq k_2 \wedge h_2 }} \Theta(k_1,h_1,r_1)\Theta(k_2,h_2,r_2)H_{k_1+h_1-2r_1}^c(x_1)H_{k_2+h_2-2r_2}^c(x_2) \\
& = \sum_{r\leq k \wedge h} \tilde \Theta(k,h,r)H_{k+h-2r}^c(x), \nonumber
\end{align}
where
$$
 \tilde \Theta(k,h,r) := \Pi_{i=1,2}\Theta(k_i,h_i,r_i)
$$
and $\Theta$ is defined in \eqref{bin_coeffs1D}.
\end{enumerate}

Also consider for all $\beta \in \R $ the function spaces
$$
\mathcal{H}^\beta_{\sigma^c}(\R^2) = \left\{ v: \R^2 \to \R, v \in L^2_{\sigma^c}(\R^2) ~: ~ (I- L^c)^{\beta/2} v\in L^2_{\sigma^c}(\R^2)\right\};
$$
for $\beta$ negative or non-integer the operator $L$ is understood as a pseudo-differential operator in the Gaussian space of square integrable functions. 
The Sobolev spaces $\mathcal{H}^\beta_{\sigma^c}(\R^2)$ may be identified with the complex spaces
$$
H^\beta_{\sigma^c}(\R^2)=\left\{ v=\sum_{k \geq 0} v_k H_k^c ~ : ~ \sum_{k \geq 0}(1+ c|k|)^\beta |u_k|^2<+\infty \right\}.
$$
These are Hilbert spaces with inner products given by
$$
<u,v>_{\beta,\sigma^c}= \sum_{k \geq 0}(1+ c|k|)^\beta u_k \bar v_k.
$$
By $\| \cdot \|_{\beta,\sigma^c}$ we denote the norm of $H^\beta_{\sigma^c}(\R^2)$ for all $\beta\in \R$.

\section{The vorticity equation}

As usual the vorticity equations are obtained by taking the ``curl" of equation \eqref{Euler2}.
We have 
\begin{equation*}
\nabla^\perp \cdot [(u \cdot \nabla)(\rho^c u) ] = \nabla^\perp \cdot [(\sigma^c \us \cdot \nabla)\us ] = \sum_{i,j=1,2} \partial_i^\perp \sigma^c \tilde u_j \partial_j \tilde u_i + \sigma^c \partial_i^\perp \tilde u_j \partial_j \tilde u_i + \sigma^c \tilde u_j\partial_i^\perp \partial_j \tilde u_i,
\end{equation*}
where 
$$
\sum_{i,j=1,2} \partial_i^\perp \sigma^c \tilde u_j \partial_j \tilde u_i =  \sum_{i,j=1,2} -cx_i^\perp \sigma^c \tilde u_j \partial_j \tilde u_i = \sum_{i,j=1,2} -\sigma^c \tilde u_j \partial_j(c x_i^\perp \tilde u_i)
$$
and
$$
\sum_{i,j=1,2} \sigma^c \partial_i^\perp \tilde u_j \partial_j \tilde u_i = \sum_{i,j=1,2} \sigma^c  \partial_i \tilde u_i \partial_j^\perp \tilde u_j = 0,
$$
since $\div \tilde u = \sum_{i=1,2}\partial_i \tilde u_i= 0$. Also we have
$$
\nabla^\perp \cdot \nabla (\sigma^c p)=0.
$$ 
Moreover, since $\div \us =0$ we known that there exists a real-valued function $\phi: \R \times \R^2 \to \R$ such that $\us= \nabla^\perp \phi$ and $u=\sigma^c \nabla^\perp \phi$. Thus we have
$$
\nabla^\perp \cdot \us= \sigma^c \Delta \phi \quad \mbox{ and }\quad \nabla^\perp \cdot u= \sigma^c L^c \phi,
$$
from which follows that the vorticity equation can be written as 
\begin{equation}\label{vort_eq1}
\frac{\partial }{\partial t}\sigma^c L^c\phi = -(\sigma^c \nabla^\perp \phi \cdot \nabla) L^c\phi, 
\end{equation}
or equivalently as
\begin{equation}\label{vort_eq2}
\frac{\partial}{\partial t} L^c\phi = -(\nabla^\perp \phi  \cdot \nabla) L^c \phi.
\end{equation}
In particular, we observe that the quantity $L^c \phi$ is conserved along the particle trajectories with velocity $\us$, that we denote by $\Phi_t$, that is
\begin{equation}\label{conservation}
L^c \phi (t, x) = L^c \phi (0, \Phi_{-t} (x)), \qquad  t\in \R, ~x\in\R^2.
\end{equation}
Indeed, by definition of particle trajectories, we have
\begin{align}\label{ODE_Banach}
\frac{d}{dt} \Phi_t(x) & = \rho^c u (\Phi_t(x), t) \\
\Phi_0(x) & = x, \nonumber
\end{align}
thus for all $x\in\R^2$
\begin{align*}
\frac{d}{dt} L^c\phi(t, \Phi_t(x))&= \frac{\partial}{\partial_t} L^c \phi (t, \Phi_t(x)) + \frac{d}{dt} \Phi_t(x) \cdot \nabla L^c \phi (t, \Phi_t(x)) \\
&= \frac{\partial}{\partial_t} L^c \phi (t, \Phi_t(x)) + \rho^c u ( t, \Phi_t(x)) \cdot \nabla L^c \phi (t, \Phi_t(x))=0,
 \end{align*}
where the last equality follows from \eqref{vort_eq2}.

The $L^p$-norms of $L^c \phi$ are conserved for all $p\in \{1,2, \ldots ,\infty\}$; indeed for any $f$ measurable function
\begin{align*}
\frac{d}{dt}\int_{\R^2}f(L^c\phi(t,\Phi_t(x)))dx & =  \int_{\R^2}f'(L^c\phi(t,\Phi_{t}(x)))[\frac{\partial}{\partial_t}L^c\phi(t,\Phi_t(x))+ \rho^c u(t, \Phi_t(x))\cdot \nabla L^c\phi(t, \Phi_t(x)) ]dx=0.
\end{align*}
 For $p=2$, we directly prove the statement 
\begin{align*}
\frac{1}{2}\frac{d}{dt}\| L^c \phi \|^2_{L^2} & = -\int (\nabla^\perp \phi  \cdot \nabla) L^c \phi L^c \phi dx  =  \int \div (\nabla^\perp  \phi L^c \phi ) L^c \phi dx \\
& = \int (\nabla^\perp  \phi \cdot \nabla) L^c \phi L^c \phi dx = 0.
\end{align*}

\subsection*{Existence and uniqueness}\label{weak_sol}
In this section we look for pointwise  solutions of equations \eqref{Euler2}. By equation \eqref{conservation}, we obtain weak solutions of
\begin{align*}
\frac{\partial}{\partial t} L^c\phi & = -(\nabla^\perp \phi  \cdot \nabla) L^c \phi,
\end{align*}
if we are able to solve the associated ODE for the particle trajectories
\begin{align*}
\frac{d}{dt} \Phi_t(x) & = \rho^c u (\Phi_t(x), t) \\
\Phi_0(x) & = x. \nonumber
\end{align*}
If we define by $\w$ the vorticity of $u$, that is
$$\w = \nabla^\perp \cdot u = \sigma^c L^c \phi, $$
we have 
$$\rho^c u = K_{L^c} * \rho^c \w ,$$
where $K_L(x,y)$ denotes the orthogonal gradient of $G_{L^c}$, that in turn denotes the Green's function for the Ornstein-Uhlenbeck operator $L^c$ on $\R^2$. We consider initial data $\rho^c \w_0 \in L^1(\R^2)\cap L^\infty(\R^2)$. 

In order to compute $G_{L^c}$, we consider the operator $L^c= \Delta -cx \cdot \nabla$ as a perturbation of $\Delta$, thus we write $G_{L^c}$ in terms of $G$, where $G$ denotes the Green's function for $\Delta$ in $\R^2$. It is well-known that $G(x,y)=\frac{1}{2\pi}\ln |x-y|$. By definition
$$
L^c G_{L^c}(x,y)= \delta(x-y)= \Delta G(x,y)
$$
in the sense of distributions, that is
$$
\forall f \in L^\infty, \quad \int L^c G_{L^c}(x,y)f(y)dy= \int \delta(x-y)f(y)dy= \int \Delta G(x,y) f(y)dy.
$$
Now the idea is to apply $\Delta^{-1}$ to both members of the latter expression. Since $\rho^c u = K_{L^c} * \rho^c \w $, we  use $\Delta^{-1}$ in $L^1_{\rho^c}$. It is easy to check that 
\begin{equation}\label{G^r}
G^{\rho^c}(x,y)=\sigma^c(y)G(x,y)
\end{equation}
and 
\begin{equation}\label{G_L^r}
G^{\rho^c}_{L^c} (x,y)=\sigma^c(y)G_{L^c}(x,y)
\end{equation}
where $G^{\rho^c}$ and $G^{\rho^c}_{L^c}$ denote respectively the Green's functions for $\Delta$ and $L^c$ in $L^1_{\rho^c}$.
Hence, we get
\begin{equation}\label{GLr}
G_{L^c}^{\rho^c} = G^{\rho^c} + G^{\rho^c}* x\cdot \nabla_x G^{\rho^c}_{L^c}
\end{equation}
where this should be understood as 
$$
\forall f \in L^\infty, \quad \int G_{L^c}^{\rho^c} (x,y)f(y)dy= \int  G^{\rho^c}(x,y) f(y)dy+ \int\int G^{\rho^c}(x,z)z\cdot \nabla_z G_{L^c}^{\rho^c} (z,y)f(y)dzdy.
$$
Using iteratively equation \eqref{GLr}, we get an expression for $\rho^c u = K_{L^c} * \rho^c \w $ such that 
\begin{align*}
|\rho^c(x) u(x,t)| &= |\nabla^\perp \phi(x,t)| \leq \| \rho^c \w \|_{L^\infty} \left\{ \frac{1}{2\pi} \int \frac{\dsg(y)}{|x-y|} + \left(\frac{1}{2\pi}\right)^2 \int \int \frac{c|x_1|\dsg(x_1)\dsg(y)}{|x-x_1||x_1- y|} + \dots  \right. \\
& \left. +  \left(\frac{1}{2\pi}\right)^n \int \cdots \int \frac{c^{n-1}|x_1| \cdots |x_{n-1}|\dsg(x_1)\cdots \dsg(x_{n-1})\dsg(y)}{|x-x_1|\cdots|x_{n-1}- y|} + \dots  \right\}.
\end{align*}
The $n$-th term of the previous expansion is smaller than $\frac{\sqrt{2\pi}}{2}\left(\frac{1}{2\pi}\right)^nc^{n-1}$, thus we have $|\rho^c u|\lesssim  \| \rho^c \w \|_{L^\infty}$, where $\lesssim$ stands for less or equal up to a multiplicative constant.

\begin{thm}
Given $ \rho^c\w_0 \in L^1\cap L^\infty$, there exists $T>0$ such that equation \eqref{ODE_Banach} has a unique solution in $[-T,T]$ and $\rho^c \w \in L^\infty([-T,T]; L^1\cap L^\infty)$ is a weak solution for equation \eqref{vort_eq2}.
\end{thm}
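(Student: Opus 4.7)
The plan is to carry out a Yudovich-style argument for the transport ODE, using the bounds on $K_{L^c}$ already derived above the statement. The key structural remark is that the hypothesis $\divr u=0$ translates, after integration by parts against $\rho^c$, to $\div(\rho^c u)=0$ in the standard sense, so the vector field driving \eqref{ODE_Banach} is divergence-free on $\R^2$ and its flow $\Phi_t$ (once constructed) preserves Lebesgue measure. Together with the pointwise identity $\rho^c\w = L^c\phi$ and the Lagrangian conservation \eqref{conservation}, this reduces the whole problem to solving \eqref{ODE_Banach} and transporting the initial data.

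First I would upgrade the a priori bound $|\rho^c u|\lesssim \|\rho^c\w\|_{L^\infty}$ obtained just before the theorem to a log-Lipschitz modulus of continuity:
\[
|\rho^c u(x) - \rho^c u(x')| \leq C\,\|\rho^c\w\|_{L^\infty}\,|x-x'|\,\bigl(1 + \bigl|\log|x-x'|\bigr|\bigr),
\qquad |x-x'|\le \tfrac12.
\]
The leading term of the series for $K_{L^c}*\rho^c\w$ is the classical Biot-Savart-type operator with Gaussian weight, for which this estimate is the standard one; the higher-order terms carry extra factors of $c^{n-1}$ together with Gaussian weights $\dsg$ absorbing the $|x_1|\cdots|x_{n-1}|$ factors, so they sum to a Lipschitz (in fact better) contribution. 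This is the technically delicate step and the main obstacle, as the kernel is only defined through the convergent series \eqref{GLr} rather than in closed form.

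Second, on the strength of boundedness plus Osgood (log-Lipschitz) regularity of $\rho^c u$, I would set up a Picard-type iteration on the pair $(\Phi^{(n)}, \rho^c\w^{(n)})$: given the flow $\Phi^{(n)}$, define
\[
\rho^c\w^{(n+1)}(t,x) := (\rho^c\w_0)\bigl(\Phi^{(n)}_{-t}(x)\bigr),
\qquad
\tfrac{d}{dt}\Phi^{(n+1)}_t(x) = \rho^c u^{(n+1)}(t,\Phi^{(n+1)}_t(x)),
\]
with $\rho^c u^{(n+1)} = K_{L^c}*\rho^c\w^{(n+1)}$. Because each $\Phi^{(n)}_t$ preserves Lebesgue measure, the $L^1\cap L^\infty$ norms of $\rho^c\w^{(n)}$ are uniformly controlled by those of the initial datum, whence $\|\rho^c u^{(n)}\|_{L^\infty}$ is uniformly bounded. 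An Osgood-type Gronwall argument on the distance $|\Phi^{(n+1)}_t(x) - \Phi^{(n)}_t(x)|$, using the log-Lipschitz estimate, yields contraction on a short time interval $[-T,T]$, with $T$ depending only on $\|\rho^c\w_0\|_{L^1\cap L^\infty}$. Uniqueness follows from the same Osgood argument applied to any two candidate solutions.

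Third, I would set $\rho^c\w(t,x) := (\rho^c\w_0)(\Phi_{-t}(x))$, noting that measure preservation of $\Phi_t$ gives $\rho^c\w \in L^\infty([-T,T];L^1\cap L^\infty)$. To verify the weak form of \eqref{vort_eq2}, pair $L^c\phi = \rho^c\w$ against a test function $\psi \in C^1_c([-T,T]\times\R^2)$, use the change of variable $y=\Phi_{-t}(x)$ (Jacobian $1$) to rewrite
\[
\int \psi(t,x)\,L^c\phi(t,x)\,dx = \int \psi(t,\Phi_t(y))\,L^c\phi(0,y)\,dy,
\]
then differentiate in $t$ and recognize $\partial_t\psi(t,\Phi_t(y)) + \rho^c u\cdot \nabla\psi(t,\Phi_t(y))$ inside the integral, which gives precisely the weak formulation of \eqref{vort_eq2} after integration in time and one more change of variables back to $x$.
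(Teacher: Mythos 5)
Your proposal follows essentially the same route as the paper: a quasi-Lipschitz (log-Lipschitz) modulus for $\rho^c u$ extracted term by term from the series for $K_{L^c}*\rho^c\omega$, an Osgood-type argument giving existence and uniqueness of the flow of \eqref{ODE_Banach}, and Lagrangian transport of $\rho^c\omega_0$ to verify the weak vorticity equation. The only substantive difference is that you make explicit the Picard iteration on the coupled pair $(\Phi,\rho^c\omega)$ and the Lebesgue-measure preservation coming from $\div(\rho^c u)=0$, which the paper delegates to the Banach-space Osgood theorem of \cite{S} and to Appendix 2.3 of \cite{MP}; note only that the paper controls the higher-order terms of the series by the same log-Lipschitz splitting as the leading one, rather than showing they are genuinely Lipschitz as you assert.
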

\begin{proof}
By Osgood's theorem in Banach spaces (see \cite{S}), if $\rho^c u$ is a quasi-Lipschitz field, we obtain a unique solution for the Cauchy problem \eqref{ODE_Banach} in $[-T,T]$. For $x,x' \in \R^2$, we have
\begin{align*}
|\rho^c(x)u(x,t)- \rho^c(x')u(x',t)| & \leq \| \rho^c \w \|_{L^\infty} \left\{  \frac{1}{2\pi} \int \left| \frac{(x-y)^\perp}{|x-y|^2} - \frac{(x'-y)^\perp}{|x'-y|^2} \right| \dsg(y) + \right. \\
& \left.  \left(\frac{1}{2\pi}\right)^2 \int \int  \left| \frac{(x-x_1)^\perp}{|x-x_1|^2} - \frac{(x'-x_1)^\perp}{|x'-x_1|^2} \right| \frac{c|x_1|\dsg(x_1)\dsg(y)}{|x_1- y|} + \ldots \right\}.
\end{align*}
Below we follow Appendix 2.3 of \cite{MP} (where the case of a bounded domain is treated) to prove the quasi-Lipschitz continuity. Let $r:=|x-x'|$; for $r\geq 1$ the statement is a consequence of the previous computations, for $r<1$ we set $A:=\left\{ y\in \R^2 ~| ~|x-y|\leq 2r \right\}$ and  we write 
\begin{align*}
\int_{\R^2} \left| \frac{(x-y)^\perp}{|x-y|^2} - \frac{(x'-y)^\perp}{|x'-y|^2} \right| \dsg(y)&  = 
\int_{ A} \left| \frac{(x-y)^\perp}{|x-y|^2} - \frac{(x'-y)^\perp}{|x'-y|^2}\right| \dsg(y) + \int_{ A^c} \left| \frac{(x-y)^\perp}{|x-y|^2} - \frac{(x'-y)^\perp}{|x'-y|^2}\right| \dsg(y) .
\end{align*}
On one hand,
$$
\int_{ A} \left| \frac{(x-y)^\perp}{|x-y|^2} - \frac{(x'-y)^\perp}{|x'-y|^2}\right| \dsg(y) \leq \int_{ |x-y|\leq 2r} \left[ \frac{1}{|x-y|} + \frac{1}{|x'-y|}\right] \dsg(y) \lesssim r.
$$
On the other, choosing $x''$ to be a point belonging to the segment $x, x'$, for $y\in A^c$ we have $|x''-y|\geq \frac{1}{2}|x-y|$, thus
\begin{align*}
\int_{ A^c} \left| \frac{(x-y)^\perp}{|x-y|^2} - \frac{(x'-y)^\perp}{|x'-y|^2}\right| \dsg(y) & \lesssim r \int_{ A^c}\frac{1}{|x''-y|^2} \dsg(y) \\
& \lesssim r\left\{ \int_{2r< |x-y|<2} \frac{1}{|x-y|^2}\dsg(y) + \int_{\R^2} \dsg(y)\right\} \\
&\lesssim r \left\{ \int_{2r< |x-y|<2} \frac{1}{|x-y|^2}dy +1 \right\}.
\end{align*}
Computing the integrals we obtain
$$
\int_{\R^2} \left| \frac{(x-y)^\perp}{|x-y|^2} - \frac{(x'-y)^\perp}{|x'-y|^2} \right| \dsg(y) \lesssim \lambda(|x-x'|)
$$
where $\lambda$, defined by $\lambda(r)= r$, for $r \geq 1$ and by $\lambda(r)= r(1-\ln r)$, for $r<1$, is the modulus of continuity for $\rho^c u$. That is $\rho^c u$ is quasi-Lipschitz continuous and by Osgood's theorem there exists a unique flow given by $\Phi_t(x)=x+ \int_0^t \rho^c(x) u(s,\Phi_s(x))ds$ for $t\in [-T,T]$. From $ \rho^c(x)\w(x,t)=\rho^c(\Phi_{-t}(x))\w_0(\Phi_{-t}(x))$ and the assumptions we get $\rho^c \w \in L^\infty([-T,T]; L^1\cap L^\infty)$, which is  sufficient to verify the vorticity equation in the weak sense, that is
$$
\frac{d}{dt}\int \rho^c \w f dx = \int  \rho^c \w ( \nabla^\perp \phi \cdot \nabla f) dx, \qquad  \forall f \in C^1_0.
$$
\end{proof}



\section{Quasi-invariant measures}

On a probability space $(\Omega, \mathcal{F}, \mathbb{P})$ we let $\{ g_k \}_{k \in \Z^2}$ to be a sequence of independent and identically distributed random variables, where each $g_k$ is distributed as a standard, complex-valued Gaussian. We denote by $\tilde \lambda_k$ the eigenvalues of the Ornstein-Uhlenbeck operator $L^c$ on $L^2_{\sigma^c}$, that is $\tilde \lambda_k=-c|k|$ for all $k\geq 0$. For any given $n$, we consider the random variable 
$$
\Gamma^n_\gamma(\w, x):=\sum_{k\in \{\alpha_1, \ldots, \alpha_{d(n)}\}} \frac{g_k(\w)}{1-\tilde \lambda_k} H_k^c(x),
$$ 
whose law is given by 
$$
d\mu^n_{\sigma^c,\gamma}(\phi) \simeq \prod_{k\in \{\alpha_1, \ldots, \alpha_{d(n)}\}} \frac{\gamma(1+ c|k|)^2}{2\pi} e^{-\frac{\gamma}{2}(1+c|k|)^2 |\phi_k|^2}d\phi_k,
$$
for every $\phi(t,x)=\sum_{k \geq 0} \phi_k(t)H_k^c(x) \in L^2_{\sigma^c}$. The $\alpha_1, \ldots, \alpha_{d(n)}$ denote non-negative pairs of $\Z^{2}$.
In the limit when $n$  tends to infinity $\Gamma^n_\gamma(\w, x)$ converges pointwise to $\Gamma_\gamma(\w, x):= \sum_{k\geq 0} \frac{g_k(\w)}{1-\tilde \lambda_k} H_k^c(x)$ and we denote by $d\mu_{\sigma^c,\gamma}$ its law. For $\phi\in L^2_{\sigma^c}$,
\begin{align*}
d\mu_{\sigma^c,\gamma}(\phi) & \simeq \prod_{k\geq 0} \frac{\gamma(1+ c|k|)^2}{2\pi} e^{-\frac{\gamma}{2}(1+c|k|)^2 |\phi_k|^2}d\phi_k \\
& \simeq \frac{1}{Z_\gamma} e^{-\frac{\gamma}{2}\| (I-L^c) \phi \|_{L^2_{\sigma^c}}^2} \mathcal{D}\phi,
\end{align*}
thus $\mu_{\sigma^c,\gamma}$ is formally the Gibbs-type measure associated to the quantity $\frac{1}{2}\| (I-L^c)\phi \|_{L^2_{\sigma^c}}^2$.

For any $\gamma\in\R^+$, the triple $(H^{-\varepsilon}_{\sigma^c}, H^2_{\sigma^c},  d\mu_{\sigma^c,\gamma})$ is a complex abstract Wiener space for $\varepsilon>0$; $H^{-\varepsilon}_{\sigma^c}$ is the support of $\mu_{\sigma^c,\gamma}$ and $H^2_{\sigma^c}$ is the Cameron-Martin space. In particular; $\mathbb{E}_{\mu_{\sigma^c,\gamma}}(\phi_k \bar \phi_h)=\delta_{k,h}\frac{2}{\gamma (1+c|k|)^2}$, $\mathbb{E}_{\mu_{\sigma^c,\gamma}}(\phi_k )=0$, and $\mathbb{E}_{\mu_{\sigma^c,\gamma}}|\phi_k |^{2r}=\frac{2^r r!}{\gamma^r (1+c|k|)^{2r}}$.

Now we prove that the supports of the measures $\mu_{\sigma^c,\gamma}$ are not only spaces of very irregular functionals, but that in fact contain regular functions. Namely, $L^p_{loc}(\R^2)\subset \supp_{\mu_{\sigma^c, \gamma}}$ for every $p\in (2,10/3)$. We will use the so called ``dispersive bound'' for Hermite functions, firstly proved in dimension one by N. Burq, L. Thomann and N. Tzvetkov in \cite{BTT} and extended to other dimensions by A. Poiret in his Ph.D. thesis \cite{P}.

Below we denote by $h_k(x)$ the $k$-th order Hermite's function on $\R^2$, defined by $h_k(x)=h_{k_1}(x_1)\times h_{k_2}(x_2)$ for all $x\in \R^2$ and for all non-negative $k\in \Z^2$, where
$$
h_{k_i}(x_i)= \frac{(-1)^n 2^{-k_i /2}}{\sqrt{\sqrt{\pi} k_i!}}\frac{d^{k_i}}{d x^{k_i}}(e^{-x_i^2})e^{x_i^2/2}, \qquad i=1,2.
$$
It is well known that $h_k$ is an eigenfunction, with corresponding eigenvalue denoted by $\lambda_k^2$, for the harmonic oscillator $H:= -\Delta + |x|^2$ on $L^2(\R^2)$, that is $H h_k= \lambda_k^2 h_k$. The eigenvalues are $\lambda_k^2=\lambda_{k_1}^2+\lambda_{k_2}^2= (2k_1+1) + (2k_2+1)=2(|k|+1)$. For further details see \cite{P}. The relation between the Hermite's polynomials and the Hermite's functions is the following:
\begin{equation}\label{rel_hermite_poly_funct}
H_k^c(x)=(-1)^k  \sqrt{\sqrt{\pi} }~h_k\left(\sqrt{\frac{c}{2}} x\right)e^{\frac{c|x|^2}{4}}.
\end{equation}

The following result was proved in \cite{P}.
\begin{thm}[Dispersive bound]\label{disp_bound}
Let $d\geq 2$. There exists a constant $C>0$ such that for all $n,m$
\begin{equation*}
\| h_n h_m\|_{L^2(\R^d)} \leq C\times 
\begin{lcases}
 & \max(\lambda_n, \lambda_m)^{-\frac{2}{3} + \frac{d}{6}};\qquad  2\leq d\leq 4 \\
&  \max(\lambda_n, \lambda_m)^{-2 + \frac{d}{2}}; \qquad  d\geq 4 .
\end{lcases}
\end{equation*}
Moreover there exists a positive constant $C$ such that for all $n,m$
$$
\| h_nh_m\|_{L^{\frac{d+3}{d+1}}(\R^d)}\leq C \max(\lambda_n, \lambda_m)^{-\frac{1}{d+1}}.
$$
\end{thm}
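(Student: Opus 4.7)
The strategy is to combine sharp one-dimensional $L^p$ bounds for Hermite functions (of Koch--Tataru / Burq--Thomann--Tzvetkov type) with the tensor structure $h_k(x)=\prod_{i=1}^d h_{k_i}(x_i)$ of the multidimensional basis. The identity
\[
\|h_n h_m\|_{L^2(\mathbb{R}^d)}^2 = \prod_{i=1}^d \|h_{n_i} h_{m_i}\|_{L^2(\mathbb{R})}^2
\]
reduces the multidimensional estimate to a product of one-dimensional bilinear bounds, after which an optimisation over the partitions $|n|=n_1+\dots+n_d$, together with $\lambda_k^2=2|k|+d$, produces the announced exponents.

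For the one-dimensional input I would carry out a semiclassical analysis of $h_n$ on the line, decomposing $\mathbb{R}$ into the oscillatory WKB zone $|x|<\lambda_n(1-\delta)$ where $|h_n(x)|\sim \lambda_n^{-1/2}(\lambda_n^2-x^2)^{-1/4}$, the Airy transition zone of width $\sim \lambda_n^{-1/3}$ around the turning point $|x|=\lambda_n$ where $|h_n|\sim \lambda_n^{-1/6}$, and the exponentially decaying forbidden region $|x|>\lambda_n(1+\delta)$. Integrating these profiles yields the sharp bounds $\|h_n\|_{L^p(\mathbb{R})}\lesssim \lambda_n^{-\sigma(p)}$, with a phase transition at $p=4$ controlled by the Airy zone, together with bilinear refinements that trade bulk behaviour of $h_n$ against endpoint behaviour of $h_m$. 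Feeding these into the tensor factorisation and maximising over partitions produces the first exponent $-\tfrac{2}{3}+\tfrac{d}{6}$ for $2\leq d\leq 4$; for $d\geq 4$ the $L^4$ input saturates and I would replace it by the endpoint $L^\infty$ bound $\|h_n\|_{L^\infty}\lesssim \lambda_n^{-1/6}$ paired with $\|h_m\|_{L^2}=1$, which yields the second exponent $-2+d/2$. For the $L^{(d+3)/(d+1)}$ estimate, the target is the Tomas--Stein restriction exponent associated to the spectral measure of the harmonic oscillator, and I would obtain it by interpolating between the $L^2$ bound just established and the trivial $\|h_n h_m\|_{L^1(\mathbb{R}^d)}\leq \|h_n\|_{L^2}\|h_m\|_{L^2}=1$, choosing the interpolation parameter so the decay rate is exactly $-1/(d+1)$.

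The main difficulty is the sharp semiclassical analysis at the turning point: the Airy asymptotics there dictate the $L^\infty$ exponent $-1/6$ and the phase transition at $p=4$ in dimension one, which then propagate through the tensor argument to produce the kink at $d=4$ in the stated bound. Full details are worked out in Poiret's thesis \cite{P}.
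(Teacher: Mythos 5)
First, a point of comparison: the paper does not prove Theorem \ref{disp_bound} at all --- it is quoted verbatim from Poiret's thesis \cite{P} --- so there is no internal argument to measure your sketch against; it has to stand on its own. For the $L^2$ estimate your tensor route is viable for the product basis $h_k=\prod_i h_{k_i}$ used here: the factorisation $\|h_nh_m\|_{L^2(\R^d)}=\prod_i\|h_{n_i}h_{m_i}\|_{L^2(\R)}$ is exact, the one-dimensional bilinear bound $\|h_{n_i}h_{m_i}\|_{L^2(\R)}\lesssim\max(\lambda_{n_i},\lambda_{m_i})^{-1/2}\log^{1/2}$ does follow from the WKB/Airy profile you describe, and since the larger of $|n|,|m|$ must put at least $\max(|n|,|m|)/d$ into some coordinate, you actually get $\max(\lambda_n,\lambda_m)^{-1/2+\epsilon}$ in \emph{every} dimension --- stronger than the stated $-\tfrac23+\tfrac d6$. (The stated exponents are those of the Koch--Tataru spectral projector bounds for general eigenfunctions of $-\Delta+|x|^2$, which is the route Poiret actually takes and which explains the kink at $d=4$; your "optimisation over partitions'' would not reproduce them, but it would imply them.) Two slips to correct: the bulk amplitude should be $|h_n(x)|\sim(\lambda_n^2-x^2)^{-1/4}$, without the extra $\lambda_n^{-1/2}$ (your normalisation would give $\|h_n\|_{L^2}=\lambda_n^{-1/2}$ and $\|h_n\|_{L^\infty}\lesssim\lambda_n^{-2/3}$, contradicting the $\lambda_n^{-1/6}$ you correctly use later); and pairing $\|h_n\|_{L^\infty}\lesssim\lambda_n^{-1/6}$ with $\|h_m\|_{L^2}=1$ yields the exponent $-\tfrac16$, which implies the $d\geq4$ statement but does not "produce'' $-2+\tfrac d2$.

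The genuine gap is the last step. The $L^{\frac{d+3}{d+1}}$ bound cannot be obtained by interpolating between the trivial $L^1$ bound and the $L^2$ bound: since $\tfrac{d+3}{d+1}<2$, writing $\tfrac{d+1}{d+3}=(1-\theta)+\tfrac{\theta}{2}$ gives $\theta=\tfrac{4}{d+3}$, hence for $2\leq d\leq 4$
\begin{equation*}
\| h_nh_m\|_{L^{\frac{d+3}{d+1}}}\leq \| h_nh_m\|_{L^1}^{1-\theta}\, \| h_nh_m\|_{L^2}^{\theta}\lesssim \max(\lambda_n,\lambda_m)^{\frac{4}{d+3}\left(-\frac23+\frac d6\right)}=\max(\lambda_n,\lambda_m)^{\frac{2(d-4)}{3(d+3)}},
\end{equation*}
which for $d=2$ is $\max(\lambda_n,\lambda_m)^{-4/15}$, strictly weaker than the required $\max(\lambda_n,\lambda_m)^{-1/3}$; for $d=3$ it gives $-\tfrac19$ versus $-\tfrac14$, and for $d\geq4$ it gives no decay at all versus the required $-\tfrac{1}{d+1}$. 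Decay at the Tomas--Stein exponent sits \emph{below} $L^2$ and cannot be reached from above by interpolation against a bound with no decay. To close this you must either rerun the tensor argument directly at $p=\tfrac{d+3}{d+1}$ (for $d=2$: $\|h_nh_m\|_{L^{5/3}(\R)}\leq\|h_n\|_{L^{10/3}(\R)}\|h_m\|_{L^{10/3}(\R)}\lesssim(\lambda_n\lambda_m)^{-1/5}$ from the same semiclassical profile, supplemented by a separate estimate in the frequency-separated regime $\lambda_m\ll\lambda_n^{2/3}$, where H\"older with two equal exponents is insufficient to reach $\max(\lambda_n,\lambda_m)^{-1/3}$), or quote Poiret's bilinear spectral projector estimate at $p=\tfrac{2(d+3)}{d+1}$ as the paper implicitly does.
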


In the two-dimensional case and for particular values of $p$, we show that the above result implies the following control over the $L^p$-norms of the Hermite's functions.

\begin{cor}\label{disp_bound_cor1}
For all $p\in(2,\frac{10}{3})$,
$$
\forall ~n, \qquad \| h_n\|_{L^p(\R^2)} \leq C \lambda_n^{(\theta-1)/6},
$$
where $\theta\in (0,1)$ is such that $\frac{1}{p}= \frac{\theta}{2} + \frac{1-\theta}{10/3}$.
\end{cor}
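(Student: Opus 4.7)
The plan is to establish the desired $L^p$ bound by interpolating between an $L^2$ bound and an $L^{10/3}$ endpoint bound derived from Theorem \ref{disp_bound}.

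First, I would specialize the two estimates in Theorem \ref{disp_bound} to $d=2$ and $n=m$. The second inequality gives
\[
\|h_n^2\|_{L^{5/3}(\R^2)}=\|h_n\|_{L^{10/3}(\R^2)}^2\leq C\,\lambda_n^{-1/3},
\]
so that $\|h_n\|_{L^{10/3}(\R^2)}\leq C\,\lambda_n^{-1/6}$. (The first inequality, applied with $n=m$, would yield the analogous but weaker bound $\|h_n\|_{L^{4}(\R^2)}\leq C\,\lambda_n^{-1/12}$, which is not needed here.) On the other hand, since the Hermite functions form an orthonormal basis of $L^2(\R^2)$, we have trivially $\|h_n\|_{L^2(\R^2)}=1$.

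The second step is a standard log-convexity interpolation of $L^p$-norms. For any $p\in(2,10/3)$, pick $\theta\in(0,1)$ with $\frac{1}{p}=\frac{\theta}{2}+\frac{1-\theta}{10/3}$; then by H\"older's inequality
\[
\|h_n\|_{L^p(\R^2)}\leq \|h_n\|_{L^2(\R^2)}^{\theta}\,\|h_n\|_{L^{10/3}(\R^2)}^{1-\theta}\leq 1\cdot \bigl(C\,\lambda_n^{-1/6}\bigr)^{1-\theta}=C\,\lambda_n^{(\theta-1)/6},
\]
which is exactly the desired bound.

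There is no real obstacle here beyond identifying the correct endpoint: the whole content is the dispersive estimate in Theorem \ref{disp_bound}, and once the $L^{10/3}$ bound has been extracted by setting $m=n$ in the second inequality, the rest is a one-line interpolation against the trivial $L^2$ normalization. The only thing to double-check is that $p=10/3$ is indeed the correct endpoint to use (rather than $p=4$), which follows because $-1/6$ beats $-1/12$ and because every $p\in(2,10/3)$ can be written as an interpolation exponent between $2$ and $10/3$.
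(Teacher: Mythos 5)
Your proof is correct and follows essentially the same route as the paper: extract the endpoint bound $\|h_n\|_{L^{10/3}(\R^2)}\leq C\lambda_n^{-1/6}$ from the second estimate of Theorem \ref{disp_bound} with $d=2$ and $m=n$, then interpolate against the $L^2$ normalization $\|h_n\|_{L^2}=1$. (Only your parenthetical aside is slightly off: the first estimate with $d=2$, $m=n$ gives $\|h_n\|_{L^4(\R^2)}^2=\|h_n^2\|_{L^2(\R^2)}\leq C\lambda_n^{-1/3}$, hence $\|h_n\|_{L^4(\R^2)}\leq C\lambda_n^{-1/6}$ rather than $\lambda_n^{-1/12}$, but since you do not use that inequality this has no bearing on your argument.)
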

\begin{proof}
On one hand, from Theorem \ref{disp_bound} when $d=2$ and $n=m$, we get $\| h_n^2 \|_{L^{5/3}}\leq C \lambda_n^{-1/3}$. This implies
\begin{equation}\label{Lpbound}
\| h_n \|_{L^{10/3}(\R^2)}\leq C \lambda_n^{-1/6}.
\end{equation}
On the other hand, by H\"older's inequality,
$$
\| h_n\|_{L^p(\R^2)} \leq C\| h_n\|_{L^2(\R^2)} \| h_n\|_{L^{10/3}(\R^2)}^{1-\theta},
$$
where $\theta\in (0,1)$ is such that $\frac{1}{p}= \frac{\theta}{2} + \frac{1-\theta}{10/3}$. From the fact that $\{h_n\}_{n\geq 0}$ is an orthonormal basis for $L^2$ and from the bound \eqref{Lpbound} we conclude that
$$
\| h_n\|_{L^p(\R^2)} \leq C \| h_n\|_{L^{10/3}(\R^2)}^{1-\theta} \leq C \lambda_n^{(\theta-1)/6}.
$$
\end{proof}

Below we translate the above bounds in terms of Hermite's polynomials.

\begin{cor}\label{disp_bound_cor2}
For all $p\in(2,\frac{10}{3})$,
$$
\forall ~n, \qquad \| H_n^c\|_{L^p_{loc}(\R^2)} \leq C(p,c,R) \lambda_n^{(\theta-1)/6},
$$
where $\theta\in (0,1)$ is such that $\frac{1}{p}= \frac{\theta}{2} + \frac{1-\theta}{10/3}$ and where $R$ is a geometric constant that depends on each compact subset of $\R^2$ considered.
\end{cor}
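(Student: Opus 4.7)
The plan is to reduce the corollary directly to Corollary \ref{disp_bound_cor1} via the pointwise identity \eqref{rel_hermite_poly_funct} relating the Hermite polynomials $H_k^c$ to the Hermite functions $h_k$. The Gaussian weight $e^{c|x|^2/4}$ in that identity is the only obstruction to a global estimate, but since we are only asked for a local bound on compact subsets $K \subset \R^2$, this weight is harmlessly bounded by a constant depending on $K$ and $c$.

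First I would fix a compact set $K \subset \R^2$ and let $R = \sup_{x\in K}|x|$, so that $e^{c|x|^2/4} \leq e^{cR^2/4}$ for all $x \in K$. Starting from
$$
|H_n^c(x)| = \sqrt{\sqrt{\pi}}\,\bigl|h_n\bigl(\sqrt{c/2}\,x\bigr)\bigr|\,e^{c|x|^2/4},
$$
I would raise to the $p$-th power, integrate over $K$, pull out the exponential factor, and change variables $y=\sqrt{c/2}\,x$ (with Jacobian $(2/c)$) to obtain
$$
\|H_n^c\|_{L^p(K)}^p \leq \pi^{p/4}\,e^{cpR^2/4}\,\frac{2}{c}\,\int_{\sqrt{c/2}\,K}|h_n(y)|^p\,dy \leq C(p,c,R)\,\|h_n\|_{L^p(\R^2)}^p.
$$

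Second, I would apply Corollary \ref{disp_bound_cor1} in the range $p \in (2, 10/3)$ to get $\|h_n\|_{L^p(\R^2)} \leq C \lambda_n^{(\theta-1)/6}$ with the stated interpolation exponent $\theta$, and conclude
$$
\|H_n^c\|_{L^p(K)} \leq C(p,c,R)\,\lambda_n^{(\theta-1)/6}.
$$

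There is essentially no obstacle here: the only subtle point is making sure that the Gaussian factor in \eqref{rel_hermite_poly_funct} is tamed, which is precisely why the statement is restricted to $L^p_{loc}$ rather than $L^p(\R^2)$. The constant $R$ in the statement is identified with (a bound on the diameter of) the compact set over which the local norm is taken, and the factors of $c$ produced by the change of variables and by the exponential weight are absorbed into $C(p,c,R)$.
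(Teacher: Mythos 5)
Your proposal is correct and is essentially the same argument as the paper's: both use the identity \eqref{rel_hermite_poly_funct}, bound the Gaussian weight $e^{c|x|^2/4}$ by a constant on the compact set (which is exactly where the constant $R$ enters), change variables $y=\sqrt{c/2}\,x$ to reduce to $\|h_n\|_{L^p(\R^2)}$, and invoke Corollary \ref{disp_bound_cor1}. The only cosmetic difference is that you extract the exponential factor before the change of variables while the paper does so after; nothing of substance changes.
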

\begin{proof}
Let $R>0$; by the relation \eqref{rel_hermite_poly_funct} we have
\begin{align*}
\| H_n^c \|_{L^p_(\{|x|<R\})} & = \pi^{1/4} \left( \int_{\{|x|<R\}} \left|h_n\left(\sqrt{\frac{c}{2}} x\right) \right|^p e^{\frac{cp}{4}|x|^2} dx \right)^{1/p} \\
& \leq C(p,c)   \left( \int_{\{|x|<\sqrt{\frac{c}{2}} R\}} \left|h_n(x) \right|^p e^{\frac{p}{2}|x|^2} dx \right)^{1/p} \\
& \leq C(p,c,R) \left( \int \left|h_n(x) \right|^p dx \right)^{1/p}.
\end{align*}
If $p\in(2,\frac{10}{3})$, by Corollary \ref{disp_bound_cor1} and since $R$ is arbitrary, we get
$$
\| H_n^c\|_{L^p_{loc}(\R^2)} \leq C(p,c,R)\lambda_n^{(\theta-1)/6}.       
$$
\end{proof}

Here we characterise the supports of the measures $\mu_{\sigma^c, \gamma}$ and in particular we see that they contain regular functions (and not only distributions).  

\begin{thm}\label{support_quasinv}
Let $\varepsilon>0$ and $p\in(2,\frac{10}{3})$; then
$$
\supp_{\mu_{\sigma^c, \gamma}}= H^{-\varepsilon}_{\sigma^c}(\R^2) \cap L^p_{loc}(\R^2).
$$
\end{thm}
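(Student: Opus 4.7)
The statement splits naturally into two assertions: that $H^{-\varepsilon}_{\sigma^c}(\R^2)$ and $L^p_{loc}(\R^2)$ each carry full $\mu_{\sigma^c,\gamma}$-measure. The support is then contained in their intersection, and equals it in the natural sense through density of the Cameron-Martin space $H^2_{\sigma^c}$ (which embeds in both, since $H^2_{\sigma^c}$ is a space of continuous functions). The first assertion is essentially built into the Wiener space structure recalled after the construction of $\mu_{\sigma^c,\gamma}$: the variance
$$
\Emgs\|\phi\|_{H^{-\varepsilon}_{\sigma^c}}^2 = \sum_{k\geq 0}\frac{2}{\gamma(1+c|k|)^{2+\varepsilon}}
$$
is finite because the number of $k\in\Z^2_{\geq 0}$ with $|k|=n$ equals $n+1$ and $\sum_n (n+1)^{-1-\varepsilon}<+\infty$, so $\phi\in H^{-\varepsilon}_{\sigma^c}$ almost surely.

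The main content is $\mu_{\sigma^c,\gamma}(L^p_{loc}(\R^2)) = 1$. By taking a countable union over $R\in\N$, it suffices to prove that $\phi\in L^p(\{|x|<R\})$ almost surely for every fixed $R>0$. Writing $\phi(x)=\sum_{k\geq 0}\frac{g_k}{1+c|k|}H_k^c(x)$, the value $\phi(x)$ is a centered complex Gaussian at each $x$, whose $p$-th absolute moment is comparable to the $p/2$-power of its variance
$$
V(x):=\Emgs|\phi(x)|^2=\sum_{k\geq 0}\frac{2}{\gamma(1+c|k|)^2}|H_k^c(x)|^2.
$$
Fubini together with this pointwise Gaussian moment bound gives
$$
\Emgs\|\phi\|_{L^p(\{|x|<R\})}^p \;\leq\; C_p \,\|V\|_{L^{p/2}(\{|x|<R\})}^{p/2}.
$$

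Since $p/2\geq 1$ and the summands of $V$ are non-negative, Minkowski's integral inequality yields
$$
\|V\|_{L^{p/2}(\{|x|<R\})}\;\leq\;\sum_{k\geq 0}\frac{2}{\gamma(1+c|k|)^2}\,\|H_k^c\|_{L^p(\{|x|<R\})}^2.
$$
Applying Corollary \ref{disp_bound_cor2} with $\lambda_k^2=2(|k|+1)$ estimates $\|H_k^c\|_{L^p(\{|x|<R\})}^2$ by a constant times $(|k|+1)^{(\theta-1)/6}$. Grouping multi-indices according to $|k|=n$ (for which there are $n+1$), the series reduces to $\sum_{n\geq 0}(n+1)^{(\theta-1)/6-1}$, which converges because $\theta<1$. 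Hence $\Emgs\|\phi\|_{L^p(\{|x|<R\})}^p<+\infty$, so $\phi\in L^p(\{|x|<R\})$ almost surely, and the claim follows.

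\textbf{Main obstacle.} The delicate point is precisely the convergence of the last series: the trivial estimate $\|H_k^c\|_{L^p(\{|x|<R\})}\lesssim 1$ fails, and even a crude growth bound combined with the weight $(1+c|k|)^{-2}$ produces a harmonic series after accounting for the $O(n)$-fold multiplicity of multi-indices with $|k|=n$. The dispersive bound of Burq-Thomann-Tzvetkov and Poiret, transcribed to Hermite polynomials on compact sets as Corollary \ref{disp_bound_cor2}, is exactly what provides the negative power of $\lambda_k$ needed to tame the counting. This also explains the endpoint $p=10/3$: as $p\uparrow 10/3$, $\theta\downarrow 0$ and the gain is maximal, but beyond this value the dispersive bound in the stated form no longer yields a negative exponent.
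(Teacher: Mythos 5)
Your proposal is correct and follows essentially the same route as the paper: both reduce the $L^p_{loc}$ statement to the convergence of $\sum_k (1+c|k|)^{-2}\|H_k^c\|_{L^p(\{|x|<R\})}^2$ via Minkowski's inequality in $L^{p/2}$ and then invoke Corollary \ref{disp_bound_cor2}, with the $H^{-\varepsilon}_{\sigma^c}$ part handled by the same second-moment computation. The only (immaterial) difference is that you bound $\Emgs\|\phi\|_{L^p}^p$ through Fubini and Gaussian moment equivalence, whereas the paper estimates the mixed norm $\|\Gamma_\gamma\|_{L^p_x L^2_\omega}$ directly.
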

\begin{proof}
As $\mu_{\sigma^c, \gamma}$ is the law of the random variable $\Gamma_\gamma$, its support is given by the spaces in which $\Gamma_\gamma(\w, \cdot)$ takes values $\mathbb{P}$-almost surely. For any arbitrary $R>0$ we have 
\begin{align*}
\left( \int_{\{|x|<R\}} \| \Gamma_\gamma(\w, x) \|^p_{L^2_\w} dx \right)^{1/p} & =  \left\{   \int_{\{|x|<R\}}  \left[ \mathbb{E}_{\mathbb{P}} \left(  \sum_{h,k}\frac{g_k(\w)\bar g_h(\w)}{(1-\tilde \lambda_k)(1-\tilde \lambda_h)} H_k^c(x)H_h^c(x) \right) \right]^{p/2} dx \right\}^{1/p}\\
& = \left(  \int_{\{|x|<R\}}   \left|  \sum_{k}\frac{|H_k^c(x)|^2}{(1-\tilde \lambda_k)^2 } \right|^{p/2} dx) \right)^{1/p}\\
&= \left\| \sum_{k}\frac{|H_k^c(x)|^2}{(1-\tilde \lambda_k)^2 } \right\|^{1/2}_{L^{p/2}(\{|x|<R\})}\\
& \leq \left(   \sum_{k}  \frac{1}{(1-\tilde \lambda_k)^2 }   \left\| H_k^c \right\|^{2}_{L^{p}(\{|x|<R\})} \right)^{1/2},
\end{align*}
which in turn, by Corollary \ref{disp_bound_cor2},
$$
\leq C(p,c,R)  \left(   \sum_{k}  \frac{\lambda_k^{-2\delta(k)}}{(1-\tilde \lambda_k)^2 }\right)^{1/2}   \leq C(p,c,R) \left(   \sum_{k}  \frac{1}{(1+c|k|)^{2+ \delta(k) } }\right)^{1/2} < +\infty
$$
with  $\delta(k)$  a strictly positive quantity. Moreover, for any $\varepsilon>0$ we have 
$$
\mathbb{E}_{\mu_{\sigma^c, \gamma}} \| \phi \|_{-\varepsilon, \sigma^c}^2 = \sum_k (1+c|k|)^{-\varepsilon} \mathbb{E}_{\mu_{\sigma^c, \gamma}} |\phi_k|^2= \frac{2}{\gamma}\sum_k \frac{1}{(1+c|k|)^{2+\varepsilon}} < +\infty.
$$
\end{proof}

\addtocontents{toc}{\protect\setcounter{tocdepth}{2}}



\section{The vorticity vector field}

Similarly to what was previously done for Euler equation in a compact domain  (c.f. \cite{AC}), we plan to write the vorticity equation,
$$
\partial_t L^c\phi = - (\nabla^\perp \phi \cdot \nabla) L^c\phi,
$$
as an infinite system of ordinary differential equations, using the orthonormal basis of $L^2_{\sigma^c}(\R^2)$ made of the Hermite polynomials $\{H_k^c\}_{k \in \Z^2}$. Let $\phi \in L^2_{\sigma^c}(\R^2)$ be such that $\phi(t,x)= \sum_{k\geq 0} \phi_k(t) H_k^c(x)$ for some $\phi_k : \R \to \C$ to determine. 
On one hand 
\begin{equation}\label{LHS}
\partial_t L^c\phi(t,x) = -c\sum_{k\geq 0}|k|\frac{d}{dt}\phi_k(t) H_k^c(x),
\end{equation}
on the other 
\begin{align*}
- (\nabla^\perp \phi \cdot \nabla) L^c\phi & =  c \sum_{p\geq 0}\sum_{\substack{ q\geq 0 \\ |q|<|p| }} (|p| -|q|) \phi_p \phi_q \nabla^\perp H_p^c \cdot \nabla H_q^c,
\end{align*}
since $\nabla^\perp H_p^c\cdot \nabla H_q^c= - \nabla H_p^c\cdot \nabla^\perp H_q^c$. By $p \geq 0$ we mean $p_i \geq 0$, for $i=1,2$. From Hermite polynomial's properties \eqref{diff_formula} and \eqref{prod_formula} we have
\begin{align*}
\nabla^\perp H_p^c \cdot \nabla H_q^c&= -\frac{1}{c^2}\sqrt{p_2 q_1}H_{p_1}^c(x_1)H_{p_2-1}^c(x_2)H_{q_1-1}^c(x_1)H_{q_2}^c(x_2) + \frac{1}{c^2}\sqrt{p_1 q_2}H_{p_1-1}^c(x_1)H_{p_2}^c(x_2)H_{q_1}^c(x_1)H_{q_2-1}^c (x_2)\\
& = -\frac{1}{c^2}\sqrt{p_2 q_1}\sum_{\substack{r_1 \leq p_1 \wedge q_1-1 \\ r_2 \leq p_2-1 \wedge q_2}}\Theta(p_1, q_1-1, r_1)\Theta(p_2-1, q_2, r_2)H_{p+q-1-2r}^c(x) \\
& \quad + \frac{1}{c^2}\sqrt{p_1 q_2} \sum_{\substack{r_1 \leq p_1-1 \wedge q_1 \\ r_2 \leq p_2 \wedge q_2-1}}\Theta(p_1-1, q_1, r_1)\Theta(p_2, q_2-1, r_2)H_{p+q-1-2r}^c(x)\\
&= \frac{1}{c^2}\sum_{\substack{|r| <|q| }}\left[-\sqrt{p_2 q_1}\Theta(p_1, q_1-1, r_1)\Theta(p_2-1, q_2, r_2) \right.\\
& \left. \quad+ \sqrt{p_1 q_2}\Theta(p_1-1, q_1, r_1)\Theta(p_2, q_2-1, r_2) \right] H_{p+q-1-2r}^c(x),
\end{align*}
where in the last equality we used $|q|<|p|$. We define $k=p+q-1-2r$, then $r=(p+q-1-k)/2$ and $0< |k| < 2|p|$; we get
\begin{align}\label{RHS}
 -(\nabla^\perp \phi \cdot \nabla) L^c\phi  = &\frac{1}{c}\sum_{p\geq 0}\sum_{0< |k|<2|p|}\sum_{\substack{ q\geq 0 \\ |q|<|p|}} (|p| -|q|)   A(p,q,k) \phi_p \phi_q H_k^c(x),
\end{align}
where
\begin{align}\label{A_pqk}
A(p,q,k)&:= \left[-\sqrt{p_2 q_1}\Theta(p_1, q_1-1,(p_1+ q_1-1 -k_1)/2)\Theta(p_2-1, q_2, (p_2+ q_2-1 -k_2)/2)  \right. \nonumber \\
& \left. \quad+ \sqrt{p_1 q_2}\Theta(p_1-1, q_1, (p_1+ q_1-1 -k_1)/2)\Theta(p_2, q_2-1, (p_2+ q_2-1 -k_2)/2) \right] .
\end{align}
Comparing equations \eqref{LHS} and \eqref{RHS}, the vector field $B^c$, corresponding to the equation 
$$
\frac{\partial }{\partial t}\phi(t,x)= B^c(\phi(t,x))
$$
where $\phi$ denotes the Euler stream-function, can be written as follows
\begin{equation}\label{B_before_permutation}
B^c(\phi)  = - \frac{1}{c^2}\sum_{p\geq 0}\sum_{0< |k|<2|p|}\sum_{\substack{ q\geq 0 \\ |q|<|p|}} \frac{1}{|k|}(|p| -|q|)   A(p,q,k)  \phi_p \phi_q  H_k^c(x) .
\end{equation}

\begin{rem}[Properties of A(p,q,k)]\label{prop_A}
For all non-negative $p,q,k$ the quantity $A(p,q,k)$ verifies the following properties, 
\begin{enumerate}
\item $A(p,q,k)=-A(q,p,k)$; \label{propA1}
\item $A(p,q,k)=0$, if $p_i>q_i+1 + k_i$ or $q_i>p_i+1+ k_i$ for some $i=1,2$; \label{propA2}
\item \label{propA3}
\begin{align*}
A(p,q,k)^2 & \lesssim \frac{p!q!k!}{(p+q-1-k)!^2}\left[ \frac{(p_1-q_1+1+k_1)(q_2-p_2+1+k_2) -(p_2-q_2+1+k_2)(q_1-p_1+1+k_1)}{(p-q+1+k)!(q-p+1+k)!} \right]^2 \\
&=\frac{p!q!k!}{(p+q-1-k)!^2} \left[ \frac{1}{(p_2-q_2+ 1 +k_2)(q_1-p_1+1+k_1)} - \frac{1}{(p_1-q_1+ 1 +k_1)(q_2-p_2+1+k_2)}  \right]^2    \\
& ~~~\times \frac{1}{(q-p+k)!^2 (p-q+k)!^2} \\
&\lesssim \frac{p!q!k!}{(p+q-1-k)!^2(q-p+k)!^2 (p-q+k)!^2}\lesssim \frac{p!q!}{(p+q-1-k)!^2  (q-p-1+k)!^2 k!}. 
\end{align*}
\end{enumerate}
\end{rem}

We can permute the series in the indices $k$ and $p$ that appear in the expression of $B^c$; moreover from property \ref{propA2} of $A(p,q,k)$, we deduce that the vorticity equation for \eqref{Euler2}  reads as 
\begin{equation}\label{odes2}
\frac{d}{dt}\phi_k(t)=B_k^c(\phi), \qquad \forall k > 0,
\end{equation}
where $B_k^c$ denotes the $k$-th component in the Hermite basis of $B^c$. Namely
\begin{equation}\label{B}
B^c(\phi )= \sum_{k> 0} B_k^c(\phi) H_k^c(x),
\end{equation}
where
\begin{equation}\label{Bk}
B_k^c(\phi)=-\frac{1}{c^2|k|} \sum_{ p\geq  |k|/2}\sum_{\substack{ q \geq 0 \\ |q|<|p| }} (|p| -|q|)  A(p,q,k)\phi_p \phi_q .
\end{equation}



\subsection{Regularity}\label{regularity_subs}

In this section we study the $L^r$-regularity of $B^c$ and its derivatives with respect to the measures $\mu_{\sigma^c,\gamma}$.

\begin{prop}\label{reg_B}
Let $\beta\in \R $ and $\varepsilon >0$; then $B^c\in L^r_{\mu_{\sigma^c,\gamma} } (H^{-\varepsilon}_{\sigma^c}(\R^2) ; H^\beta_{\sigma^c}(\R^2))$ for all $r\geq 1$.
\end{prop}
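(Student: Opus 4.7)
The plan is to reduce the general $r\ge 1$ case to $r=2$ by Gaussian hypercontractivity, and to control the $L^2$ norm through Wick's theorem together with the combinatorial bounds of Remark \ref{prop_A}. Each coordinate $B_k^c(\phi)$ is a quadratic polynomial in the i.i.d.\ complex Gaussian coefficients $\{\phi_p\}_{p\ge 0}$, so it belongs to the finite sum of Wiener chaoses of order at most two. Nelson's hypercontractivity gives $\|B_k^c\|_{L^r(\mu_{\sigma^c,\gamma})}\le C_r\|B_k^c\|_{L^2(\mu_{\sigma^c,\gamma})}$ for every $r\ge 2$, with $C_r$ independent of $k$. Combining this with the triangle inequality in $L^{r/2}$ applied to $\{(1+c|k|)^{\beta/2}|B_k^c|\}_{k}$ yields
\[
\Bigl(\mathbb{E}_{\mu_{\sigma^c,\gamma}}\|B^c\|_{\beta,\sigma^c}^{r}\Bigr)^{1/r}\le \Bigl(\sum_{k>0}(1+c|k|)^{\beta}\,\|B_k^c\|_{L^{r}(\mu_{\sigma^c,\gamma})}^{2}\Bigr)^{1/2}\le C_r\Bigl(\sum_{k>0}(1+c|k|)^{\beta}\,\mathbb{E}_{\mu_{\sigma^c,\gamma}}|B_k^c|^{2}\Bigr)^{1/2},
\]
the case $1\le r<2$ following from Jensen's inequality. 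Hence it suffices to prove that the right-hand sum is finite.

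For the $L^2$ bound I would plug \eqref{Bk} into $\mathbb{E}_{\mu_{\sigma^c,\gamma}}|B_k^c|^{2}$ and invoke Wick's theorem. Since the $\phi_p$ are mean-zero complex Gaussians with $\mathbb{E}_{\mu_{\sigma^c,\gamma}}(\phi_p\phi_q)=0$ and $\mathbb{E}_{\mu_{\sigma^c,\gamma}}(\phi_p\bar\phi_{p'})=\delta_{p,p'}\,2/(\gamma(1+c|p|)^{2})$, the only surviving pairings in $\mathbb{E}(\phi_p\phi_q\bar\phi_{p'}\bar\phi_{q'})$ are $(p=p',q=q')$ and $(p=q',q=p')$; the antisymmetry $A(p,q,k)=-A(q,p,k)$ of Remark \ref{prop_A}(i) makes the two contributions equal, producing
\[
\mathbb{E}_{\mu_{\sigma^c,\gamma}}|B_k^c|^{2}\le \frac{C}{c^{4}\gamma^{2}|k|^{2}}\sum_{\substack{p\ge |k|/2\\ |q|<|p|}}\frac{(|p|-|q|)^{2}\,A(p,q,k)^{2}}{(1+c|p|)^{2}(1+c|q|)^{2}}.
\]
What remains is purely deterministic: show that this double sum, weighted by $(1+c|k|)^{\beta}/|k|^{2}$ and summed over $k>0$, is finite.

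The main obstacle is this deterministic estimate. My strategy is to invoke Remark \ref{prop_A}(ii), which forces $|q_i-p_i|\le 1+k_i$ and so restricts the $q$-sum at fixed $p,k$ to a box of cardinality $\lesssim (1+|k|)^{2}$ on which $(|p|-|q|)^{2}\lesssim (1+|k|)^{2}$, together with the bound $A(p,q,k)^{2}\lesssim p!\,q!/[(p+q-1-k)!^{2}(q-p-1+k)!^{2}\,k!]$ from Remark \ref{prop_A}(iii). The factor $k!=k_1!\,k_2!$ in the denominator provides super-polynomial decay in $|k|$, which comfortably absorbs the polynomial weight $(1+c|k|)^{\beta}$ for every $\beta\in\R$, while the Gaussian damping $(1+c|p|)^{-2}(1+c|q|)^{-2}$ together with the remaining factorials renders the $p$-sum convergent at each fixed $k$. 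The delicate point is to handle the regimes where $p+q-1-k$ or $q-p-1+k$ has small coordinates (so the factorial denominators are close to one) by splitting the sum into diagonal and off-diagonal pieces and estimating each separately; once this is carried out uniformly in $k$, the claim follows.
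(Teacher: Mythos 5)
Your proof is correct and reaches the same deterministic double sum as the paper, but gets there by a genuinely different probabilistic route. The paper expands $\Emgs\|B^c\|_{\beta,\sigma^c}^{2r}$ directly for odd $r$, applies iterated H\"older/Minkowski inequalities twice (once over $k$, once over the quadruple $(p,q,p',q')$), and then inserts the explicit Gaussian moment formula $\Emgs|\phi_p|^{2r}=2^r r!/(\gamma^r(1+c|p|)^{2r})$; this yields the explicit constant $C(r,\gamma,c)=(4/(\gamma^2c^4))^r r!^2$. You instead observe that each $B_k^c$ lies in the Wiener chaos of order at most two, invoke Nelson's hypercontractivity to reduce all $L^r$ norms to $L^2$, use Minkowski in $L^{r/2}$ over $k$, and compute $\Emgs|B_k^c|^2$ by Wick. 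Your route is cleaner and more standard; the paper's brute-force route has the (here inessential, but used for the analogous estimates \eqref{nablaB_est} and \eqref{div_est}) virtue of tracking the $r$-dependence of the constant explicitly, though hypercontractivity gives a comparable $(r-1)$ factor anyway. Two small remarks: the $\phi_p$ are independent but not identically distributed (their variances depend on $|p|$), which is harmless for hypercontractivity; and in the Wick computation the pairing $p=q'$, $q=p'$ is actually excluded outright by the constraints $|q|<|p|$, $|q'|<|p'|$, so the antisymmetry of $A$ is not needed there --- your inequality still holds. Finally, the convergence of the resulting deterministic sum is the real content of the proposition; the paper asserts it after citing property (iii) of Remark \ref{prop_A} with no further detail, and your sketch (using property (ii) to confine $q$ to a box around $p$, and the $k!$ and $(p+q-1-k)!^2$ denominators for decay in $k$ and $p$) is at least as explicit, though neither argument is fully carried out.
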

\begin{proof}
It is enough to prove $\Emgs\| B^c(\phi) \|_{\beta,\sigma^c}^{2r} < +\infty$ for all odd $r\geq 1$.
We have
\begin{align*}
\Emgs \| B^c(\phi)\|_{\beta,\sigma^c}^{2r} &  = \Emgs \left( \sum_{k>0}(1+c|k|)^\beta |B_k^c(\phi)|^2 \right)^r \\
&= \Emgs \sum_{k_1, \ldots, k_r} \prod_{i=1, \ldots, r} (1+c|k_i|)^\beta|B_{k_i}^c(\phi)|^2 \\
& \leq \sum_{k_1, \ldots, k_r} \prod_{i=1, \ldots, r} (1+c|k_i|)^\beta \left( \Emgs |B_{k_i}^c(\phi)|^{2r}\right)^{1/r}\\
&= \left[ \sum_{k>0}(1+c|k|)^\beta \left( \Emgs |B_k^c(\phi)|^{2r}\right)^{1/r}\right]^r.
\end{align*}
From \eqref{Bk} we get
\begin{align*}
\Emgs |B_k^c(\phi)|^{2r} & \leq \left[ \sum_{p,p'}\sum_{q,q'} \frac{1}{c^4|k|^{2}} (|p| -|q|) (|p'| -|q'|)A(p,q,k)A(p',q',k) \left( \Emgs (\phi_p \phi_q \bar \phi_{p'} \bar \phi_{q'})^{r} \right)^{1/r }\right]^r \\
& = \left[ 2  \sum_{p,q}\frac{1}{c^4|k|^{2}} (|p| -|q|)^2 A(p,q,k)^2  \left( \Emgs |\phi_p |^{2r} \right)^{1/r }\left( \Emgs |\phi_q |^{2r} \right)^{1/r }  \right]^r \\
& \leq C(r,\gamma, c)  \left[ \sum_{p,q} \frac{1}{|k|^{2}} \frac{(|p| -|q|)^2}{(1+c|p|)^2(1+c|q|)^2} A(p,q,k)^2   \right]^r, 
\end{align*}
where $C(r,\gamma,c)= \left( \frac{4}{\gamma^2c^4}\right)^r r!^2 $. By property \ref{propA3} of $A(p,q,k)$, we obtain 
\begin{align*}
\Emgs |B_k^c(\phi)|^{2r}& \leq  C(r,\gamma,c)  \left[ \frac{1}{|k|^{2}k!} \sum_{p,q}  \frac{p!q!}{(p+q-1-k)!^2  (q-p-1+k)!^2}  \right]^r <+\infty, \qquad \forall ~k>0.
\end{align*}
We conclude that for all $\beta$,
$$
\Emgs \| B^c(\phi)\|_{\beta,\sigma^c}^{2r} \leq C(r,\gamma,c)  \left[ \sum_{k>0 } \sum_{ |p|\geq |k|/2} \sum_{ q\geq 0 \\ |q|<|p| }\frac{p!q!}{(p+q-1-k)!^2  (q-p-1+k)!^2 k! |k|^{2}(1+c|k|)^{-\beta} }\right]^r < +\infty.
$$
\end{proof}

In particular the field $B^c$ takes values in the Cameron-Martin space $H^2_{\sigma^c}$. 

In Malliavin calculus (c.f.\cite{Ma}), for a functional $F$ defined on an abstract Wiener space $(X,P, H)$, where $P$  and $H$ denote the corresponding Wiener measure and Cameron-Martin space, one defines derivatives along directions $h\in H$ as follows:

$$D_h F(\omega )=\lim_{\epsilon \rightarrow 0} \frac{1}{\epsilon }[F(\omega +h )-F(\omega )],$$
the limit being taken almost everywhere with respect to $P$. Then these derivatives determine a gradient operator which is a linear operator on $H$ and we can use the identification $\nabla F \in H$ by Riesz theorem. If $\nabla F$ is Hilbert-Schmidt we can iterate the procedure and define the second gradient (etc).

\begin{prop}\label{HSreg}
Let $\beta\in \R$ and $\varepsilon >0$; then $\nabla B^c\in L^r_{\mu_{\sigma^c,\gamma}}( H^{-\varepsilon}_{\sigma^c}(\R^2); H.S.(H^2_{\sigma^c}(\R^2); H^\beta_{\sigma^c}(\R^2) ) )$ and \newline $\nabla^2 B^c\in L^r_{\mu_{\sigma^c,\gamma}}( H^{-\varepsilon}_{\sigma^c}(\R^2); H.S.(H^2_{\sigma^c}(\R^2) \otimes H^2_{\sigma^c}(\R^2); H^\beta_{\sigma^c}(\R^2) ) )$ for all $r\geq 1$.
\end{prop}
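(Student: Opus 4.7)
The plan is to exploit the fact that $B_k^c(\phi)$ is a homogeneous polynomial of degree two in the Gaussian coefficients $\{\phi_m\}$, so its first Malliavin derivative is linear in $\phi$ and its second derivative is deterministic. This reduces the proposition to two concrete series estimates of the same type as the one already established in Proposition \ref{reg_B}.

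First I would compute the derivatives explicitly. Since $A(p,q,k)=-A(q,p,k)$ (property (\ref{propA1}) in Remark \ref{prop_A}), one can rewrite $B_k^c$ with a symmetric kernel $\tilde C_k(p,q):=-\frac{1}{c^2|k|}(|p|-|q|)A(p,q,k)$, obtaining
$$
\partial_{\phi_m} B_k^c(\phi) = 2 \sum_{q} \tilde C_k(m,q)\,\phi_q,
\qquad
\partial_{\phi_m}\partial_{\phi_n} B_k^c(\phi) = 2\tilde C_k(m,n),
$$
where the summations respect the original index constraints (which are harmless, since property (\ref{propA2}) forces $\tilde C_k(m,q)=0$ outside an appropriate range). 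Using the orthonormal basis $\{(1+c|m|)^{-1}H_m^c\}$ of $H^2_{\sigma^c}$, the Hilbert--Schmidt norms become
$$
\|\nabla B^c(\phi)\|_{HS}^2 = \sum_{m,k}\frac{(1+c|k|)^\beta}{(1+c|m|)^2}\bigl|\partial_{\phi_m}B_k^c(\phi)\bigr|^2,
\quad
\|\nabla^2 B^c\|_{HS}^2 = \sum_{m,n,k}\frac{(1+c|k|)^\beta}{(1+c|m|)^2(1+c|n|)^2}\bigl|\partial_{\phi_m}\partial_{\phi_n}B_k^c\bigr|^2.
$$

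Next I would estimate each piece. The second derivative is deterministic, so the $L^r$ statement reduces to a single bound. Using property (\ref{propA3}) of Remark \ref{prop_A}, one gets
$$
\|\nabla^2 B^c\|_{HS}^2 \lesssim \sum_{m,n,k}\frac{(1+c|k|)^\beta(|m|-|n|)^2 A(m,n,k)^2}{(1+c|m|)^2(1+c|n|)^2 |k|^2} \lesssim \sum_{m,n,k}\frac{(1+c|k|)^{\beta}\,m!\,n!}{(1+c|m|)^2(1+c|n|)^2\,(m+n-1-k)!^2(n-m-1+k)!^2\,k!\,|k|^2},
$$
which is exactly (up to labelling) the convergent series obtained in the proof of Proposition \ref{reg_B}; the super-factorial decay absorbs the polynomial weight $(1+c|k|)^\beta$ for every $\beta\in\R$. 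For $\nabla B^c$, which is linear in $\phi$ and hence lies in the first Wiener chaos, I would repeat the generalised Minkowski argument of Proposition \ref{reg_B}: writing
$$
\Emgs\|\nabla B^c\|_{HS}^{2r} \leq \Bigl[\sum_{m,k}\frac{(1+c|k|)^\beta}{(1+c|m|)^2}\bigl(\Emgs|\partial_{\phi_m}B_k^c|^{2r}\bigr)^{1/r}\Bigr]^r,
$$
applying the Gaussian moment identity $\Emgs|\phi_q|^{2r}=\frac{2^r r!}{\gamma^r(1+c|q|)^{2r}}$ after expanding $|\partial_{\phi_m}B_k^c|^{2r}$, and reducing to the same factorial sum above with one additional summation over $q$.

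The main obstacle is purely bookkeeping: verifying that, after collecting indices, the only combinatorial quantity one must control is a series of the form already handled in Proposition \ref{reg_B}. There is no new analytic ingredient — the factorial decay from property (\ref{propA3}) of $A(p,q,k)$ is strong enough to accommodate the extra weight $(1+c|m|)^{-2}$ (and $(1+c|n|)^{-2}$) as well as the arbitrary Sobolev exponent $\beta$, so convergence follows uniformly.
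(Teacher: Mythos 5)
Your proposal is correct and follows essentially the same route as the paper: compute the first and second Malliavin derivatives explicitly (linear in $\phi$, respectively deterministic, by the antisymmetry of $A$), expand the Hilbert--Schmidt norms in the basis $\{(1+c|k|)^{-1}H_k^c\}$ of $H^2_{\sigma^c}$, and reduce via the generalised Minkowski inequality and the Gaussian moment identity to the same factorial series controlled by property (iii) of Remark \ref{prop_A}. The only slip is the factor $2$ in $\partial_{\phi_m}B_k^c$ and $\partial_{\phi_m}\partial_{\phi_n}B_k^c$ (symmetrising the kernel over the full index range already accounts for the restriction $|q|<|p|$, so the correct constant is $1$), which is immaterial to the conclusion.
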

\begin{proof}
First we compute the Malliavin derivative of $B^c(\phi)$ with respect to the $j$-th order Hermite polynomial, $H_j^c \in H^2_{\sigma^c}$; we have
$$
D_{H_j^c}B^c(\phi)= \sum_{k>0} D_{H_j^c}B_k^c(\phi) H_k^c(x),
$$
where by the definition above
$$
D_{H_j^c}B_k^c(\phi)= \lim_{\varepsilon \to 0^+} \frac{1}{\varepsilon} [B_k^c(\phi+ \varepsilon H_j^c)- B_k^c(\phi)]
$$
and the above limit is taken almost everywhere with respect to $\mu_{\sigma^c,\gamma}$. Therefore
\begin{align*}
D_{H_j^c}B_k^c(\phi)& =-\frac{1}{c^2|k|}\left( \sum_{\substack{q \geq 0 \\ |q|<|j|}} (|j|-|q|)A(j,q,k)\phi_q + \sum_{|p|>|j|} (|p|-|j|)A(p,j,k)\phi_p  \right) \\
& = -\frac{1}{c^2|k|} \sum_{\substack{q \geq 0 }} (|j|-|q|)A(j,q,k)\phi_q,
\end{align*}
in the last equality we relabelled the series in $p$ and used property \ref{propA1} of $A(p,q,k)$ (c.f. Remark \ref{prop_A}). Also we have 
$$
D_{H_i^c} D_{H_j^c} B_k^c(\phi) =-\frac{1}{c^2|k|}(|j|-|i|)A(j,i,k).
$$

We denote by $\{\hat H_k^c(x)\}_k$ the orthonormal basis of $H^2_{\sigma^c}$, that is $\hat H_k^c(x)= \frac{H_k^c(x)}{1+ c|k|}$ for all $k\geq 0$, and we have
$$
\| \nabla B^c(\phi)\|_{H.S.(H^2_{\sigma^c}; H^\beta_{\sigma^c})}^{2r} =\left( \sum_{j\geq 0} \| D_{\hat H_j^c} B^c(\phi)\|_{\beta,\sigma^c}^2 \right)^r= \left(\sum_{j,k}\frac{|D_{H_j^c} B_k^c(\phi)|^2}{(1+c|k|)^{-\beta}(1+c|j|)^2}\right)^r
$$
and
\begin{align*}
\mathbb{E}_{\mu_{\sigma^c,\gamma}} \| \nabla B^c(\phi) \|_{H.S.(H^2_{\sigma^c}; H^\beta_{\sigma^c})}^{2r} & = 
\Emgs \sum_{\substack{j_1, \ldots, j_r \\ k_1, \ldots, k_r}} \prod_{i=0}^r \frac{|D_{ H_{j_i}} B_{k_i}^c(\phi)|^2}{(1+c|k_i|)^{-\beta}(1+c|j_i|)^2} \\
& \leq \left[ \sum_{j,k} \frac{\left( \Emgs|D_{ H_j^c} B^c(\phi)|^{2r}\right)^{1/r}}{(1+c|k|)^{-\beta}(1+c|j|)^2}\right]^r
\end{align*}
where
\begin{align*}
\left( \Emgs|D_{ H_j^c} B^c(\phi)|^{2r}\right)^{1/r}& \leq  \sum_{q \geq 0 } \frac{(|j|-|q|)^2}{c^4|k|^2}A(j,q,k)^2\left( \Emgs |\phi_q|^{2r}\right)^{1/r} \\
&= \frac{2 r!^{1/r}}{\gamma } \sum_{q \geq 0 } \frac{(|j|-|q|)^2}{c^4|k|^2}A(j,q,k)^2\frac{1}{(1+c|q|)^2}.
\end{align*}
By property \ref{propA3} of $A(p,q,k)$ and for every $\beta\in \R$
\begin{align*}
\mathbb{E}_{\mu_{\sigma^c,\gamma}} \| \nabla B^c(\phi) \|_{H.S.(H^2_{\sigma^c}; H^\beta_{\sigma^c})}^{2r}& \leq \left(  \frac{2}{\gamma c^4}\right)^r r! \left[   \sum_{j,k,q}  \frac{(|j|-|q|)^2}{(1+c|k|)^{-\beta}(1+c|j|)^2|k|^2(1+c|q|)^2}A(j,q,k)^2 \right]^r 
< +\infty.
\end{align*}
In particular we have
\begin{equation}\label{nablaB_est}
\mathbb{E}_{\mu_{\sigma^c,\gamma}} \| \nabla B^c(\phi) \|_{H.S.(H^2_{\sigma^c}; H^\beta_{\sigma^c})}^{r} \leq \tilde C(\gamma,c)^r r!^{1/2}
\end{equation}

Similarly, for the second order derivative we have
\begin{align*}
\mathbb{E}_{\mu_{\sigma^c,\gamma}} \| \nabla^2 B^c(\phi) \|_{H.S.(H^2_{\sigma^c}\otimes H^2_{\sigma^c}; H^\beta_{\sigma^c})}^{2r} & \leq \left[ \sum_{i,j,k} \frac{\left( \Emgs|D_{ H_i^c}D_{H_j^c} B^c(\phi)|^{2r}\right)^{1/r}}{(1+c|k|)^{-\beta}(1+c|j|)^2(1+c|i|)^2}\right]^r \\
& =  \left[ \sum_{i,j,k}  \frac{(|j|-|i|)^2A(j,i,k)^2}{c^4|k|^2(1+c|k|)^{-\beta}(1+c|j|)^2(1+c|i|)^2}\right]^r < +\infty.
\end{align*}
\end{proof}



\subsection{The divergence operator}
The divergence on an abstract Wiener space $(X,P,H)$ is the dual of the gradient operator on this space. Namely, for $Z:X\rightarrow H$ the divergence of $Z$, denoted by $\div_P Z$, is such that 

$$E_P (F \div_P Z)=-E_P <Z,\nabla F>$$
for every functional $F$ in $L^2_P (X)$ with $\nabla F \in L^2_P (X;H)$.

For all $n$ we denote by $B^{n,c} $ a Galerkin approximation of $B^c$, that is the projection of $B^c$ on the subspace of $L^2_{\sigma^c}$ generated by $\{H_{\alpha_1}, \ldots, H_{\alpha_{d(n)}}\}$ where $\alpha_1, \ldots, \alpha_{d(n)}$ denote non-negative pairs of $\Z^{2}$. We have
\begin{equation}\label{Galerkin_approxB}
B^{n,c} (\phi)=\sum_{k\in\{\alpha_1, \ldots, \alpha_{d(n)}\}}B^{n,c} _k(\phi) H_k^c(x).
\end{equation}
We denote by $\mu_{\sigma^c,\gamma}^n$ the probability measure given by 
$$
d\mu_{\sigma^c,\gamma}^n(\phi) = \prod_{k\in\{\alpha_1, \ldots, \alpha_{d(n)}\}} \frac{1}{Z^n_{\gamma,k}}e^{-\frac{\gamma}{2}(1+c|k|)^2|\phi_k|^2}d\phi_k,
$$
and by $\eta^n_\gamma$ the Radon-Nikodym density of $d\mu^n_{\sigma^c,\gamma}$ with respect to the Lebesgue measure, $d\lambda^n$, that is $\eta^n_\gamma=d\mu^n_{\sigma^c,\gamma}/d\lambda^n=\frac{1}{Z_\gamma^n}e^{-\frac{\gamma}{2}\sum_{k\in \{\alpha_1, \ldots, \alpha_{d(n)}  \} } (1+c|k|)^2|\phi_k|^2 }$, where $\frac{1}{Z_\gamma^n}=\prod_{k\in \{\alpha_1, \ldots, \alpha_{d(n)}\}} \frac{\gamma (1+c|k|)^2}{2\pi}$.

The divergence of $B^{n,c} $ with respect to the measure $\mu_{\sigma^c,\gamma}^n$ is given by
\begin{align*}
\div_{\mu_{\sigma^c,\gamma}^n} B^{n,c} (\phi) & = \div B^{n,c} (\phi) + < B^{n,c} _k(\phi),  \frac{\nabla \eta^n_\gamma }{\eta^n_\gamma}>_{\C^{d(n)}}.
\end{align*}
On one hand
\begin{align*}
\div B^{n,c} (\varphi) &= \sum_k D_{H_k^c} B^{n,c} _k(\varphi)  =\sum_k \lim_{\varepsilon \to 0^+}\frac{1}{\varepsilon} \left[ B^{n,c} _k(\phi + \varepsilon H_k^c)- B^{n,c} _k(\phi) \right] \\
& = -\sum_{k\in \{\alpha_1, \ldots, \alpha_{d(n)}\}} \frac{1}{c^2|k|} \sum_{ p\geq 0 }(|p|-|k|)A(p,k,k)\phi_p ;
\end{align*} 
on the other
\begin{align*}
< B^{n,c} _k(\phi),  \frac{\nabla \eta^n_\gamma }{\eta^n_\gamma}>_{\C^{d(n)}}& =- \gamma\sum_{k\in \{\alpha_1, \ldots, \alpha_{d(n)}\}} (1+c|k|)^2 B^{n,c} _k(\phi) \bar \phi_k \\
& = \gamma \sum_{k\in \{\alpha_1, \ldots, \alpha_{d(n)}\}} \frac{(1+c|k|)^2}{c^2|k|} \sum_{|p|\geq |k|/2} \sum_{\substack{q\geq 0 \\ |q|<|p|}}(|p|-|q|)A(p,q,k)\phi_p \phi_q \bar \phi_k.
\end{align*}
Therefore,
\begin{align}
\div_{\mu_{\gamma,\sigma^c}^n} B^{n,c} (\phi)& =  -\sum_{k\in \{\alpha_1, \ldots, \alpha_{d(n)}\}}\frac{1}{c^2|k|} \sum_{p\geq 0 }(|p|-|k|)A(p,k,k)\phi_p \\
&+ \gamma \sum_{k\in \{\alpha_1, \ldots, \alpha_{d(n)}\}} \frac{(1+c|k|)^2}{c^2|k|}  \sum_{|p|\geq |k|/2} \sum_{\substack{q\geq 0 \\ |q|<|p|}}(|p|-|q|)A(p,q,k)\phi_p \phi_q \bar \phi_k.
\end{align}

\begin{prop}\label{reg_div}
Let $\beta\in\R$ and $\varepsilon>0$, then for all $r\geq 1$ we have $\div_{\mu_{\sigma^c, \gamma}} B^c\in L^r_{\mu_{\sigma^c, \gamma}}(H^{-\varepsilon}_{\sigma^c}; \R)$.
\end{prop}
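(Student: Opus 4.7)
The plan is to invoke the explicit formula for $\div_{\mu^n_{\sigma^c,\gamma}} B^{n,c}$ just derived, split it into a linear and a cubic polynomial in $\phi$, estimate each piece in $L^{2r}_{\mu^n_{\sigma^c,\gamma}}$ uniformly in $n$, and then pass to the limit $n\to\infty$ using the closability of the Wiener-space divergence together with the convergence $B^{n,c}\to B^c$ ensured by Proposition \ref{reg_B}. It suffices to handle even integer exponents $2r$.

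Concretely, decompose $\div_{\mu^n_{\sigma^c,\gamma}} B^{n,c}(\phi)=L^{n,c}(\phi)+C^{n,c}(\phi)$ with
\begin{equation*}
L^{n,c}(\phi):=-\sum_{k\in\{\alpha_1,\dots,\alpha_{d(n)}\}}\frac{1}{c^2|k|}\sum_{p\ge 0}(|p|-|k|)\,A(p,k,k)\,\phi_p,
\end{equation*}
\begin{equation*}
C^{n,c}(\phi):=\gamma\sum_{k\in\{\alpha_1,\dots,\alpha_{d(n)}\}}\frac{(1+c|k|)^2}{c^2|k|}\sum_{|p|\ge |k|/2}\sum_{\substack{q\ge 0\\ |q|<|p|}}(|p|-|q|)\,A(p,q,k)\,\phi_p\phi_q\bar\phi_k.
\end{equation*}
Since the $\phi_p$ are independent centered complex Gaussians under $\mu^n_{\sigma^c,\gamma}$, the random variable $L^{n,c}(\phi)$ is itself a centered complex Gaussian; exploiting $\Emgs|\phi_p|^2=2/(\gamma(1+c|p|)^2)$ gives
\begin{equation*}
\Emgs |L^{n,c}(\phi)|^{2r}\;\lesssim_{r,\gamma}\;\biggl(\sum_{p\ge 0}\frac{1}{(1+c|p|)^2}\Bigl|\sum_{k>0}\frac{(|p|-|k|)A(p,k,k)}{c^2|k|}\Bigr|^2\biggr)^{r},
\end{equation*}
and the right-hand side is finite, uniformly in $n$, by Cauchy--Schwarz together with Remark \ref{prop_A}.\ref{propA3} applied with $q=k$ (the same combinatorics that already controlled $B_k^c$ in the proof of Proposition \ref{reg_B}).

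For the cubic piece $C^{n,c}$ one mimics the Hölder/Minkowski argument of Proposition \ref{reg_B}: writing $|C^{n,c}|^{2r}=C^{n,c}\overline{C^{n,c}}\cdots$ and bringing the expectation inside via Hölder, each elementary factor $\Emgs(\phi_p\phi_q\bar\phi_k\bar\phi_{p'}\bar\phi_{q'}\phi_{k'})^{r}$ is bounded by a product of Gaussian moments, yielding
\begin{equation*}
\bigl(\Emgs|\phi_p\phi_q\bar\phi_k|^{2r}\bigr)^{1/r}\;\lesssim_{r,\gamma}\;\frac{1}{(1+c|p|)^2(1+c|q|)^2(1+c|k|)^2}.
\end{equation*}
Substituting and applying Remark \ref{prop_A}.\ref{propA3} once more reduces the problem to the convergence of
\begin{equation*}
\sum_{k>0}\sum_{|p|\ge|k|/2}\sum_{|q|<|p|}\frac{(|p|-|q|)^2\,(1+c|k|)^2\,p!\,q!}{|k|^2\,(1+c|p|)^2\,(1+c|q|)^2\,(p+q-1-k)!^2\,(q-p-1+k)!^2\,k!},
\end{equation*}
which is essentially the series already shown to converge at the end of the proof of Proposition \ref{reg_B}, the extra weight $(1+c|k|)^2/(1+c|k|)^2$ being absorbed harmlessly.

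The main obstacle is checking that the cubic sum really does converge: the Cameron--Martin renormalisation produces the extra factor $(1+c|k|)^2$ in $C^{n,c}$, and one has to verify that the combinatorial decay of $A(p,q,k)$ encoded in Remark \ref{prop_A}.\ref{propA3} is still strong enough in the direction $|k|\to\infty$. Once the bounds $\sup_n\Emgs|L^{n,c}|^{2r}+\sup_n\Emgs|C^{n,c}|^{2r}<\infty$ are in hand, Fatou's lemma (or dominated convergence, using that $L^{n,c}(\phi)+C^{n,c}(\phi)\to\div_{\mu_{\sigma^c,\gamma}}B^c(\phi)$ $\mu_{\sigma^c,\gamma}$-a.e.\ by the closability of the divergence on the abstract Wiener space $(H^{-\varepsilon}_{\sigma^c},H^2_{\sigma^c},\mu_{\sigma^c,\gamma})$) yields $\div_{\mu_{\sigma^c,\gamma}}B^c\in L^{2r}_{\mu_{\sigma^c,\gamma}}$ for every integer $r\ge 1$, and the general $r\ge 1$ case follows by Hölder's inequality.
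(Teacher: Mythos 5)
Your proposal is correct and follows essentially the same route as the paper: split $\div_{\mu_{\sigma^c,\gamma}}B^c$ into its linear and cubic parts, bound the $2r$-th moments of each via the Gaussian moment identities for the $\phi_k$ together with property (iii) of Remark \ref{prop_A}, and reduce everything to the convergence of the same series that appears at the end of Proposition \ref{reg_B} (the extra $(1+c|k|)^2$ being harmless since that series converges for every $\beta$). The only difference is that you make explicit the Galerkin approximation and the passage to the limit $n\to\infty$, a step the paper leaves implicit.
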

\begin{proof}
We show that
$$
\Emgs | \div_{\mu_{\sigma^c, \gamma}} B^c |^{2r}<+\infty
$$ 
for all odd $r\geq 1$. We have
\begin{align*}
\left[ \Emgs | \div_{\mu_{\sigma^c, \gamma}} B^c |^{2r} \right]^{1/2r} & \leq  \left[ \Emgs  \Big| \sum_{k}  \frac{1}{c^2|k|}  \sum_{p\geq 0  }(|p|-|k|)A(p,k,k)\phi_p \Big|^{2r} \right]^{1/2r} \\
&  +  \left[ \Emgs \Big|  \sum_{k}  \frac{(1+c|k|)^2}{c^2|k|}  \sum_{|p|\geq |k|/2} \sum_{\substack{q\geq 0 \\ |q|<|p|}}(|p|-|q|)A(p,q,k)\phi_p \phi_q \bar \phi_k \Big|^{2r}  \right]^{1/2r} \\
& \leq  \left[ \sum_{k,p}\frac{1}{c^4|k|^2} (|p|-|k|)^2A(p,k,k)^2 ( \Emgs |\phi_p|^{2r} )^{1/r} \right]^{1/2}  \\
&  +  \left[    \sum_{k,p,q}   \frac{(1+c|k|)^4}{c^4|k|^2} (|p|-|q|)^2A(p,q,k)^2 ( \Emgs ( \phi_p \phi_q \bar \phi_k)^{2r} )^{1/r} \right]^{1/2}\\
& \leq  \frac{2}{\gamma} r!^{\frac{1}{2r}} \left[ \sum_{k,p}\frac{1}{c^4|k|^2 } \frac{(|p|-|k|)^2}{(1+c|p|)^2}A(p,k,k)^2  \right]^{1/2}  \\
&  +  \frac{2}{\gamma}^{3/2}(3r)!^{\frac{1}{2r}}\frac{1}{c^2} \left[    \sum_{k,p,q}   \frac{(1+c|k|)^4}{|k|^2} \frac{(|p|-|q|)^2}{(1+c|p|)^2(1+c|q|)^2(1+c|k|)^2}A(p,q,k)^2  \right]^{1/2}  < +\infty.
\end{align*}
In particular we get
\begin{equation}\label{div_est}
\left[ \Emgs | \div_{\mu_{\sigma^c, \gamma}} B^c |^{r} \right]^{1/r}  \leq C(\gamma,c) r!^{\frac{1}{2r}}.
\end{equation}
\end{proof}



\section{Existence and quasi-invariance}

In this section, we prove that there exists a flow for the vector field $B^c$ defined almost everywhere with respect to each probability measure $\mu_{\sigma^c,\gamma}$.  Moreover, we show that the probability measures $\mu_{\sigma^c,\gamma}$ are quasi-invariant with respect to these flows. 

The proof of these facts will follow from a result by A. S. Ustunel, Theorem 5.3.1 of \cite{U}. This theorem gives some exponential integrability conditions on the vector field that ensure existence and quasi-invariance, generalizing a previous result by A. B. Cruzeiro \cite{C3}. Both results hold for vector fields on Wiener spaces taking values in the Cameron-Martin spaces, thus below we fix $\beta=2$. 

Recall that, if  $\nu$ is a measure defined on a probability space $(\Omega, \mathcal{F}, \mathbb{P})$ and $T: \supp_\nu \to \supp_\nu$, we say that $\nu$ is quasi-invariant under $T$ if $T*\nu << \nu $.

\begin{thm}\label{ex}
Let $\beta=2$ and $\varepsilon >0$, then $B^c:  H^{-\varepsilon}_{\sigma^c} \to H^2_{\sigma^c}$ is such that there exists an almost surely unique flow for $B^c$ defined by 
\begin{equation}
U_t^c(\phi)= \phi + \int_0^t B^c(U_s^c(\phi))ds \qquad \mu_{\sigma^c,\gamma}-a.e. ~\phi \in H^{-\varepsilon}_{\sigma^c} \cap L^p_{loc},\quad \forall t\in \R.
\end{equation}
Moreover, the measure $\mu_{\sigma^c,\gamma}$ is quasi-invariant under $U^B_t$ and 
\begin{equation}\label{density}
k_t(\phi) = \exp\left( \int_0^t \div_{\mu_{\sigma^c,\gamma}} B^c(U_{-s}^c(\phi)) ds \right)
\end{equation}
is the corresponding Radon-Nikodym density, defined by $k_t:=\frac{d U_t^c*\mu_{\sigma^c,\gamma}}{d \mu_{\sigma^c,\gamma}}$. 
We have $k_t \in L^r_{\mu_{\sigma^c,\gamma}}$, for all $r\geq 1$.
\end{thm}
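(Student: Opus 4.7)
The plan is to apply the general existence and quasi-invariance result of A. S. Ustunel (Theorem 5.3.1 of \cite{U}), which is tailored to vector fields on abstract Wiener spaces that take values in the Cameron--Martin space and whose Malliavin derivative and Gaussian divergence satisfy exponential integrability conditions. In our setting the abstract Wiener space is $(H^{-\varepsilon}_{\sigma^c},H^2_{\sigma^c},\mu_{\sigma^c,\gamma})$ and Proposition \ref{reg_B} with $\beta=2$ tells us that $B^c$ takes values in the Cameron--Martin space $H^2_{\sigma^c}$, so the abstract framework applies directly. The plan is therefore to (i) verify the exponential integrability hypotheses from the estimates already established, (ii) set up the Galerkin approximation that realises Ustunel's construction in our concrete situation, and (iii) pass to the limit to obtain the flow, the quasi-invariance and the density formula.

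The verification of the exponential hypotheses reduces to unpacking the factorial bounds already proved. From \eqref{nablaB_est} we have $\Emgs\|\nabla B^c\|^{r}_{H.S.(H^2_{\sigma^c};H^2_{\sigma^c})}\le \tilde C(\gamma,c)^{r}(r!)^{1/2}$ and from \eqref{div_est} the estimate $\Emgs|\div_{\mu_{\sigma^c,\gamma}}B^c|^{r}\le C(\gamma,c)^{r}(r!)^{1/2}$ for every $r\ge 1$. Summing the Taylor series of the exponential yields
\[
\Emgs\exp\bigl(\lambda\|\nabla B^c\|_{H.S.(H^2_{\sigma^c};H^2_{\sigma^c})}\bigr)\le\sum_{r\ge 0}\frac{(\lambda\tilde C)^{r}}{(r!)^{1/2}}<+\infty
\]
for every $\lambda>0$, and the identical computation with $C$ replacing $\tilde C$ shows $\Emgs\exp(\lambda|\div_{\mu_{\sigma^c,\gamma}}B^c|)<+\infty$ for every $\lambda>0$. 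Not only Ustunel's hypotheses, but substantially stronger exponential moments, are therefore at our disposal.

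The construction underlying the theorem is a Galerkin approximation which in our notation is precisely the one recorded in \eqref{Galerkin_approxB}: the truncated field $B^{n,c}$ lives on a finite-dimensional subspace on which it is a polynomial, so classical ODE theory produces a unique smooth flow $U^{n,c}_t$ defined $\mu^n_{\sigma^c,\gamma}$-a.e., quasi-invariance of $\mu^n_{\sigma^c,\gamma}$ follows from the finite-dimensional change-of-variables formula, and the corresponding density is the exponential of the integrated divergence $\div_{\mu^n_{\sigma^c,\gamma}}B^{n,c}$. The uniform exponential estimates above give the Cauchy property of $(U^{n,c}_t)_n$ in $L^r_{\mu_{\sigma^c,\gamma}}(H^{-\varepsilon}_{\sigma^c})$ for every $r\ge 1$ and every $t$ in compact intervals; identifying the limit as a solution of the integral equation then follows from the $L^r$ convergence $B^{n,c}\to B^c$ provided by Proposition \ref{reg_B}. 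Almost sure uniqueness of $U^c_t$ is obtained by a Gronwall-type inequality for the difference of two candidate flows, using the Hilbert--Schmidt norm of $\nabla B^c$ along trajectories as the (integrated) Lipschitz constant and reinserting quasi-invariance to pull the estimates back to the reference measure.

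Finally, the density \eqref{density} and its $L^r_{\mu_{\sigma^c,\gamma}}$-integrability emerge from the passage to the limit of the finite-dimensional densities: an application of Jensen's inequality to the time integral in \eqref{density} reduces $\Emgs k_t^r$ to an integral in $s$ of $\Emgs\exp(r|t|\,|\div_{\mu_{\sigma^c,\gamma}}B^c|\circ U^c_{-s})$, which is finite by the exponential integrability established above together with the quasi-invariance itself. The genuine technical obstacle is not the verification of Ustunel's hypotheses---the $(r!)^{1/2}$ growth leaves ample margin---but rather the Galerkin limit, where one must simultaneously control the a.e.\ convergence of $U^{n,c}_t$, the $L^r$ convergence of $B^{n,c}(U^{n,c}_s)$ inside the time integral defining the flow, and the convergence of the densities; this is handled by the standard combination of uniform $L^r$ bounds, Borel--Cantelli extraction of an a.s.\ convergent subsequence, and dominated convergence against the exponential moments of $\|\nabla B^c\|_{H.S.}$ and $|\div_{\mu_{\sigma^c,\gamma}}B^c|$, exactly as in \cite{C3,U}.
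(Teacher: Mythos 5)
Your proposal is correct and follows essentially the same route as the paper: verify the exponential integrability of $\|\nabla B^c\|$ (via the Hilbert--Schmidt bound \eqref{nablaB_est}) and of $|\div_{\mu_{\sigma^c,\gamma}}B^c|$ (via \eqref{div_est}) by summing the Taylor series against the $(r!)^{1/2}$ moment growth, and then invoke Theorem 5.3.1 of \cite{U} together with \cite{C3} for the $L^r$-integrability of $k_t$. The only difference is that you additionally sketch the Galerkin construction and the Jensen-inequality argument that lie inside the cited results, which the paper simply delegates to \cite{U} and \cite{C3}.
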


\begin{proof}
We know from Propositions \ref{reg_B} to \ref{reg_div} that $B^c\in L^r_{\mu_{\sigma^c,\gamma} } (H^{-\varepsilon}_{\sigma^c}(\R^2) ; H^\beta_{\sigma^c}(\R^2))$; \\ 
$\nabla B^c\in L^r_{\mu_{\sigma^c,\gamma}}( H^{-\varepsilon}_{\sigma^c}(\R^2); H.S.(H^2_{\sigma^c}(\R^2); H^\beta_{\sigma^c}(\R^2) ) )$; and that $\div_{\mu_{\sigma^c, \gamma}} B^c\in L^r_{\mu_{\sigma^c, \gamma}}(H^{-\varepsilon}_{\sigma^c}; \R)$, for all $r\geq 1$. In order to apply Ustunel's result we only have to prove that  for any given $t\in \R$, there exists a positive $\lambda$ such that 
\begin{equation}
\int_0^t \Emgs \left[ \exp(\lambda |\div_{\mu_{\sigma^c,\gamma}} B^c(\phi) |) + \exp(\lambda \| \nabla B^c(\phi)\| )\right] < +\infty, 
\end{equation}
where $\| \nabla B^c(\phi)\|$ is the operator norm given by $\sup_{\substack{ h\in H^2_{\sigma^c} \\ |h|\leq 1 }}\| D_h B^c(\phi) \|_{2,\sigma^c}$.

We use estimatives \eqref{div_est} and \eqref{nablaB_est} to get respectively
\begin{align*}
\int_0^t \Emgs \left[ \exp(\lambda |\div_{\mu_{\sigma^c,\gamma}} B^c(\phi) |)\right] & = \int_0^t  \sum_{j \geq 0} \frac{\lambda^j}{j!} \Emgs |\div_{\mu_{\sigma^c,\gamma}} B^c(\phi) |^j \\
& \leq  |t|  \sum_{j \geq 0} \frac{(\lambda C(\gamma,c))^j}{\sqrt{j!}} < +\infty, \quad \forall \lambda>0,
\end{align*}
and 
\begin{align*}
\int_0^t \Emgs \left[  \exp(\lambda \| \nabla B^c(\phi)\| )\right] & \leq \int_0^t \Emgs \left[  \exp(\lambda \| \nabla B^c(\phi)\|_{H.S.} )\right] \\
&= \int_0^t  \sum_{j \geq 0} \frac{\lambda^j}{j!} \Emgs  \| \nabla B^c(\phi)\|_{H.S.}^j \\
& \leq |t|  \sum_{j \geq 0} \frac{(\lambda \tilde C(\gamma,c))^j}{\sqrt{j!}} < +\infty, \quad \forall \lambda>0.
\end{align*}

We conclude the proof since all the hypothesis of Theorem 5.3.1 of \cite{U} are satisfied. In the work \cite{C3}, under these assumptions, it is proved that $k_t \in L^r_{\mu_{\sigma^c,\gamma}}$ for all $r\geq 1$. 

\end{proof}

Finally, we recover the velocity $\tilde u$ and the pressure $p$. On one hand
\begin{equation}\label{u^c}
\tilde u= \sigma^c \nabla^\perp U^c_t(\phi), \qquad \mu_{\sigma^c,\gamma}-a.e. ~\phi \in H^{-\varepsilon}_{\sigma^c}(\R^2) \cap L^p_{loc}(\R^2),
\end{equation}
on the other, by taking the divergence of equation \eqref{Euler1}, we obtain
$$
(L^c-2cI)p= -\div \Tr |\nabla \tilde u  |^2.
$$
The operator $(L^c-2cI)$ is invertible since the value $2c$ doesn't belong to the spectrum of $L^c$. Moreover, we computed in Subsection \ref{weak_sol} the integral kernel of the inverse of $L^c$, see equations \eqref{G_L^r}-\eqref{GLr}, from this and by a perturbative argument it is possible to get the integral kernel of $(L^c-2cI)^{-1}$. 
Hence we have
\begin{equation}\label{p}
p= - (L^c-2cI)^{-1}\div\Tr|\nabla \nabla^\perp U_t^c(\phi)|^2, \qquad \mu_{\sigma^c,\gamma}-a.e. ~\phi \in H^{-\varepsilon}_{\sigma^c}(\R^2) \cap L^p_{loc}(\R^2).
\end{equation}

Last we observe that in the limit when the parameter $c$  tends to zero, equations \eqref{Euler1} converge to the ``standard" incompressible Euler equations, thus we can formally  look at \eqref{u^c} and \eqref{p} as approximations of the solutions for these equations. However, we cannot rigorously consider such limit since the measures $\mu_{\sigma^c,\gamma} $ for each  $c$ are mutually singular.

\subsection*{Acknowledgements}\noindent
The authors thank Prof. Nikolay Tzvetkov for very useful discussions. They acknowledge the support of FCT project PTDC/MAT-STA/0975/2014. The second author was also funded by the LisMath fellowship PD/BD/52641/2014, FCT, Portugal.


\bibliographystyle{plain}

\end{document}